\theoremstyle{plain}
\newtheorem{theorem}{Theorem}
\newtheorem{lemma}[theorem]{Lemma}
\newtheorem{proposition}[theorem]{Proposition}
\theoremstyle{definition}
\newtheorem{definition}[theorem]{Definition}
\theoremstyle{remark}
\numberwithin{equation}{section}
\numberwithin{theorem}{section}
\newcommand{\R}{\mathbb{R}}
\newcommand{\Rd}{\mathbb{R}^{d}}
\newcommand{\supp}{\operatorname{supp}}
\newcommand{\W}{\mathcal{W}}
\newcommand{\E}{\mathbb{E}}
\newcommand{\MT}{\mathsf{MT}}
\newcommand{\Cpl}{\mathsf{Cpl}}
\newcommand{\bary}{\operatorname{bary}}
\newcommand{\lc}{\preceq_{\textnormal{c}}}
\newcommand{\MCov}{\textnormal{MCov}}
\newcommand{\Law}{\operatorname{Law}}
\renewcommand{\P}{\mathcal{P}}
\newcommand{\PP}{\mathcal{P}}
\begin{document}

\title{The Bass functional of martingale transport}

\author[J.\ Backhoff-Veraguas]{Julio Backhoff-Veraguas}
\address{Faculty of Mathematics, University of Vienna}
\email{julio.backhoff@univie.ac.at}

\thanks{
We thank Ben Robinson for his valuable comments during the preparation of this paper. 
WS and BT 
acknowledge support by the Austrian Science Fund (FWF) through projects P 35197 and P 35519, and JB acknowledges support by the FWF through projects Y 00782 and P 36835.
}
 

\author[W.\ Schachermayer]{Walter Schachermayer}
\address{Faculty of Mathematics, University of Vienna}
\email{walter.schachermayer@univie.ac.at}

\author[B.\ Tschiderer]{Bertram Tschiderer}
\address{Faculty of Mathematics, University of Vienna}
\email{bertram.tschiderer@univie.ac.at}

\begin{abstract} 

An interesting question in the field of martingale optimal transport, is to determine the martingale with prescribed initial and terminal marginals which is most correlated to Brownian motion. Under a necessary and sufficient irreducibility condition, the answer to this question is given by a \emph{Bass martingale}. At an intuitive level, the latter can be imagined as an order-preserving and martingale-preserving space transformation of an underlying Brownian motion starting with an initial law $\alpha$ which is tuned to ensure the marginal constraints.

In this article we study how to determine the aforementioned initial condition $\alpha$. This is done by a careful study of what we dub the \emph{Bass functional}. In our main result we show the equivalence between the existence of minimizers of the Bass functional and the existence of a Bass martingale with prescribed marginals. This complements the convex duality approach in a companion paper by the present authors together with M.\ Beiglb\"ock, with a purely variational perspective. We also establish an infinitesimal version of this result, and furthermore prove the displacement convexity of the Bass functional along certain generalized geodesics in the $2$-Wasserstein space.
 
\bigskip

\noindent\emph{Keywords:} Optimal transport, Brenier's theorem, Benamou--Brenier, Stretched Brownian motion, Bass martingale. 

\smallskip

\noindent\emph{Mathematics Subject Classification (2010):} Primary 60G42, 60G44; Secondary 91G20.
\end{abstract}

\maketitle

\section{Introduction}

\subsection{Martingale optimization problem}

Let $\mu, \nu$ be elements of $\PP_{2}(\Rd)$, the space of probability measures on $\Rd$ with finite second moments. Assume that $\mu, \nu$ are in convex order, denoted by $\mu \lc \nu$, and meaning that $\int \phi \, d\mu \leqslant \int \phi \, d\nu$ holds for all convex functions $\phi \colon \Rd \rightarrow \R$. As in \cite{BaBeHuKa20,BBST23} we consider the martingale optimization problem 
\begin{equation*} \label{MBMBB} \tag{MBB}
MT(\mu, \nu) \coloneqq 
\inf_{\substack{M_{0} \sim \mu, \, M_{1} \sim \nu, \\ M_{t} = M_{0} + \int_{0}^{t}  \sigma_{s} \, dB_{s}}} 
\mathbb{E}\Big[\int_{0}^{1} \vert \sigma_{t} - I_{d} \vert^{2}_{\textnormal{HS}} \, dt\Big],
\end{equation*} 
where $B$ is Brownian motion on $\Rd$ and $\vert \cdot \vert_{\textnormal{HS}}$ denotes the Hilbert--Schmidt norm. The abbreviation ``MBB'' stands for ``Martingale Benamou--Brenier'' and this designation is motivated from the fact that \eqref{MBMBB} can be seen as a martingale counterpart of the classical formulation in optimal transport by Benamou--Brenier \cite{BeBr99}, see \cite{BaBeHuKa20,BBST23}. The problem \eqref{MBMBB} is equivalent to maximizing the covariance between $M$ and Brownian motion subject to the marginal conditions $M_{0} \sim \mu$ and $M_{1} \sim \nu$, to wit
\begin{equation} \label{MBMBB2}
P(\mu, \nu) \coloneqq 
\sup_{\substack{M_0 \sim \mu, \, M_1 \sim \nu, \\ M_t = M_0 + \int_0^t \sigma_s \, dB_s}}
\mathbb{E}\Big[\int_0^1 \textnormal{tr} (\sigma_t) \, dt\Big].
\end{equation} 
Both problems have the same optimizer and the values are related via 
\[
MT(\mu,\nu) = d + \int \vert y \vert^{2} \, d\nu(y)  - \int \vert x \vert^{2} \, d\mu(x) -2 P(\mu,\nu).
\]
As shown in \cite{BaBeHuKa20}, the problem \eqref{MBMBB} admits a strong Markov martingale $\hat{M}$ as a unique optimizer, which is called the \textit{stretched Brownian motion} from $\mu$ to $\nu$ in \cite{BaBeHuKa20}.

\subsection{Bass martingales and structure of stretched Brownian motion}

Owing to the work \cite{BBST23} it is known that the optimality property of stretched Brownian motion is related to a structural / geometric description. For its formulation we start with the following definition.

\begin{definition} \label{defi:irreducible_intro} 
For probability measures $\mu, \nu$ we say that the pair $(\mu,\nu)$ is \textit{irreducible} if for all measurable sets $A, B \subseteq \Rd$ with $\mu(A), \nu(B)>0$ there is a martingale $X= (X_{t})_{0 \leqslant t \leqslant 1}$ with $X_{0} \sim \mu$, $X_{1} \sim \nu$ such that $\mathbb{P}(X_{0}\in A, X_{1}\in B) >0$. 
\end{definition} 

We remark that in the classical theory of optimal transport one can always find couplings $(X_{0},X_{1})$ of $(\mu,\nu)$ such that $\mathbb{P}(X_{0}\in A, X_{1}\in B) > 0$, for all measurable sets $A, B \subseteq \Rd$ with $\mu(A),\nu(B) > 0$; e.g., by letting $(X_{0},X_{1})$ be independent. In martingale optimal transport this property may fail.

\smallskip

Next we recall the following concept from \cite{Ba83, BaBeHuKa20, BBST23}.

\begin{definition} \label{def:BassMarti_intro} Let $B = (B_{t})_{0 \leqslant t \leqslant 1}$ be Brownian motion on $\Rd$ with $B_{0} \sim \hat{\alpha}$, where $\hat{\alpha}$ is an arbitrary element of $\PP(\Rd)$, the space of probability measures on $\Rd$. Let $\hat{v} \colon \Rd \rightarrow \R$ be convex such that $\nabla \hat{v}(B_{1})$ is square-integrable. We call 
\begin{equation} \label{def:BassMarti_intro_eq} 
\hat{M}_{t} \coloneqq 
\E[\nabla \hat{v}(B_{1}) \, \vert \, \sigma(B_{s} \colon s \leqslant t)]
= \E[\nabla \hat{v}(B_{1}) \, \vert \, B_{t}], \qquad 0 \leqslant t \leqslant 1
\end{equation}
a \textit{Bass martingale} with \textit{Bass measure} $\hat{\alpha}$ joining $\mu = \Law(\hat{M}_{0})$ with $\nu = \Law(\hat{M}_{1})$.
\end{definition}

The reason behind this terminology is that Bass \cite{Ba83} used this construction (with $d=1$ and $\hat{\alpha}$ a Dirac measure) in order to derive a solution of the Skorokhod embedding problem.

\smallskip

In \cite[Theorem 1.3]{BBST23} it is shown that under the irreducibility assumption on the pair $(\mu,\nu)$ there is a unique Bass martingale $\hat{M}$ from $\mu$ to $\nu$, i.e., satisfying $\hat{M}_{0} \sim \mu$ and $\hat{M}_{1} \sim \nu$:

\begin{theorem} \label{MainTheorem} Let $\mu, \nu \in \PP_{2}(\Rd)$ with $\mu \lc \nu$ and assume that $(\mu,\nu)$ is irreducible. Then the following are equivalent for a martingale $\hat{M}=(\hat{M}_{t})_{0 \leqslant t \leqslant 1}$ with $\hat{M}_{0} \sim \mu$ and $\hat{M}_{1} \sim \nu$:
\begin{enumerate}[label=(\arabic*)] 
\item \label{MainTheorem_1} $\hat{M}$ is stretched Brownian motion, i.e., the optimizer of \eqref{MBMBB}.
\item \label{MainTheorem_2} $\hat{M}$ is a Bass martingale.  
\end{enumerate}
\end{theorem}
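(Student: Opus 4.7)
The plan is to prove the two implications separately, combining the variational analysis of the Bass functional developed in the body of this paper with Brenier's theorem and the uniqueness of the stretched Brownian motion from \cite{BaBeHuKa20}. A central observation is that minimizing the cost in \eqref{MBMBB} is equivalent, up to marginal-dependent constants, to maximizing $\mathbb{E}\bigl[\int_{0}^{1} \textnormal{tr}(\sigma_{t}) \, dt\bigr]$ over admissible martingales, i.e., to solving the problem $P(\mu,\nu)$ in \eqref{MBMBB2}.

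For the direction \ref{MainTheorem_2} $\Rightarrow$ \ref{MainTheorem_1}, I would argue by verification. Let $\hat{M}$ be a Bass martingale with Bass measure $\hat{\alpha}$ and convex potential $\hat{v}$, and let $u_{s}$ denote the Brownian semigroup applied to $\hat{v}$. Then $\hat{M}_{t} = \nabla u_{1-t}(B_{t})$ almost surely, and It\^o's formula yields a diffusion coefficient $\hat{\sigma}_{t} = \nabla^{2} u_{1-t}(B_{t})$. Since convexity is preserved by the heat semigroup, $\hat{\sigma}_{t}$ is symmetric positive semi-definite, so $\textnormal{tr}(\hat{\sigma}_{t}) = \vert \hat{\sigma}_{t} \vert_{\textnormal{nuc}}$. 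A direct computation combining the martingale property with integration by parts shows that the objective value achieved by $\hat{M}$ equals $\mathbb{E}[\nabla \hat{v}(B_{1}) \cdot (B_{1} - B_{0})]$, which is precisely the value of the quadratic optimal transport problem from $\Law(B_{1})$ to $\nu$ realized by the Brenier map $\nabla \hat{v}$. For any competing admissible martingale $M$ with diffusion $\sigma$, an analogous identification expresses $\mathbb{E}[\int_{0}^{1} \textnormal{tr}(\sigma_{t}) \, dt]$ as the correlation $\mathbb{E}[(M_{1} - M_{0}) \cdot B_{1}]$, which a Young-type inequality involving $\hat{v}$ and its Legendre transform bounds above by the same transport value, yielding optimality of $\hat{M}$ in \eqref{MBMBB}.

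For the direction \ref{MainTheorem_1} $\Rightarrow$ \ref{MainTheorem_2}, I would invoke the variational machinery of this paper. Under irreducibility of $(\mu,\nu)$, the Bass functional admits a minimizer $(\hat{\alpha}, \hat{v})$ by the paper's main existence result, and the Bass martingale $\tilde{M}$ built from $(\hat{\alpha}, \hat{v})$ has marginals $\mu$ and $\nu$ by construction. The implication just established shows that $\tilde{M}$ solves \eqref{MBMBB}, and by uniqueness of the optimizer of \eqref{MBMBB} (from \cite{BaBeHuKa20}), the given $\hat{M}$ must coincide with $\tilde{M}$ and hence is itself a Bass martingale.

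The principal obstacle is establishing existence of a minimizer of the Bass functional under the sole hypothesis of irreducibility: a delicate compactness and lower semicontinuity problem in the product of probability measures and convex functions, whose resolution occupies a substantial part of this paper. The verification step in \ref{MainTheorem_2} $\Rightarrow$ \ref{MainTheorem_1}, though conceptually cleaner, also requires carefully matching the Brenier transport structure inherent to Bass martingales with the comparatively unrestricted diffusion structure of arbitrary admissible martingales, in particular to obtain the sharp correlation bound used in the verification.
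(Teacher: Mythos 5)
This theorem is not proved in the present paper: it is Theorem~1.3 of the companion work \cite{BBST23}, and the current paper merely recalls it as Theorem~\ref{MainTheorem} before building on it. So there is no ``paper's own proof'' to compare against here; everything downstream of Theorem~\ref{MainTheorem} in this paper takes it as an input.

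That said, your proposed argument has a genuine circularity that would make it invalid even if the steps were carefully fleshed out. In your direction \ref{MainTheorem_1}~$\Rightarrow$~\ref{MainTheorem_2} you invoke ``the paper's main existence result'' to claim that under irreducibility the Bass functional admits a minimizer $(\hat{\alpha},\hat{v})$. But the paper never proves such an existence statement independently of Theorem~\ref{MainTheorem}. What the paper proves, in Theorem~\ref{prop_alpha_vari_mc}, is only an \emph{equivalence}: the Bass functional is attained at $\hat{\alpha}$ if and only if a Bass martingale from $\mu$ to $\nu$ with Bass measure $\hat{\alpha}$ exists. Moreover, the proof of that equivalence (specifically Lemma~\ref{prop_alpha_vari_mc_step3}) itself quotes Theorem~\ref{MainTheorem} to get Bass martingales for the regularized pairs $(\mu^{\varepsilon},\nu^{\varepsilon})$. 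Thus your route ``irreducibility $\Rightarrow$ minimizer of $\mathcal{V}$ $\Rightarrow$ Bass martingale'' recycles the very theorem you are trying to prove. The actual source of attainment under irreducibility is the convex-duality analysis of \cite{BBST23}, not a compactness/lower-semicontinuity argument in this paper as you suggest in your closing paragraph.

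Your verification direction \ref{MainTheorem_2}~$\Rightarrow$~\ref{MainTheorem_1} has the right flavor but is imprecise. The value achieved by the Bass martingale is not a single quadratic transport cost but the difference
\[
\mathbb{E}\big[\nabla\hat{v}(\tilde B_{1})\cdot(\tilde B_{1}-\tilde B_{0})\big]
= \MCov(\hat{\alpha}\ast\gamma,\nu)-\MCov(\hat{\alpha},\mu)
= \mathcal{V}(\hat{\alpha}),
\]
where $\tilde B$ is Brownian motion started from $\hat{\alpha}$ (this is essentially Lemma~\ref{prop_alpha_vari_mc_step2}). The competitor bound you gesture at, $\mathbb{E}[\int_{0}^{1}\textnormal{tr}(\sigma_{t})\,dt]\leqslant\mathcal{V}(\hat{\alpha})$, is the content of the weak-duality Lemma~\ref{prop_alpha_vari_mc_step1}, which goes through the convex dual $D(\mu,\nu)$ of \eqref{WeakDual} rather than a direct Young inequality against $B_{1}$; an attempt to bound $\mathbb{E}[(M_{1}-M_{0})\cdot B_{1}]$ (with $B$ the driving Brownian motion of a competitor, started at $0$) by the transport value between $\hat{\alpha}\ast\gamma$ and $\nu$ does not close without that duality layer, because the two Brownian motions live over different initial laws.

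In short: the citation-based status of Theorem~\ref{MainTheorem} means there is no in-paper proof to match; your \ref{MainTheorem_1}~$\Rightarrow$~\ref{MainTheorem_2} argument is circular as written; and your \ref{MainTheorem_2}~$\Rightarrow$~\ref{MainTheorem_1} argument needs the dual formulation \eqref{WeakDual} to make the competitor bound rigorous, not just a Young inequality.
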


Since, for probability measures $\mu, \nu \in \PP_{2}(\Rd)$ with $\mu \lc \nu$, stretched Brownian motion always exists by \cite[Theorem 1.5]{BaBeHuKa20}, the above theorem states that the existence of a Bass martingale follows from --- and is in fact equivalent to --- the irreducibility assumption on the pair $(\mu,\nu)$. 

\smallskip

Denoting by $\ast$ the convolution operator and by $\gamma$ the standard Gaussian measure on $\Rd$, we remark that the convex function $\hat{v}$ and the Bass measure $\hat{\alpha}$ from Definition \ref{def:BassMarti_intro} satisfy the identities 
\begin{equation} \label{eq_def_id_bm}
(\nabla \hat{v} \ast \gamma)(\hat{\alpha}) = \mu
\qquad \textnormal{ and } \qquad 
\nabla \hat{v}(\hat{\alpha} \ast \gamma) = \nu.
\end{equation}
We formalize the fundamental relations \eqref{eq_def_id_bm} and their correspondence with Bass martingales in Lemma \ref{lem_eq_co_bm_13} below.

\smallskip

Throughout we write $\gamma^{t}$ for the $d$-dimensional centered Gaussian distribution with covariance matrix $tI_{d}$ and set $\hat{v}_{t} \coloneqq \hat{v} \ast \gamma^{1-t} \colon \Rd \rightarrow \R$, for $0 \leqslant t \leqslant 1$. In these terms, \eqref{def:BassMarti_intro_eq} amounts to
\[
\hat{M}_{t} = \nabla \hat{v}_{t}(B_{t}), \qquad 0 \leqslant t \leqslant 1.
\]

\subsection{Main results} In the following we denote by $\MCov$ the \textit{maximal covariance}  between two probability measures $p_{1},p_{2} \in \PP_{2}(\Rd)$, defined as
\begin{equation} \label{eq_def_mcov}
\MCov(p_{1},p_{2}) \coloneqq \sup_{q \in \Cpl(p_{1},p_{2})} \int \langle x_{1},x_{2} \rangle \, q(dx_{1},dx_{2}),
\end{equation}
where $\Cpl(\mu,\nu)$ is the set of all couplings $\pi \in \PP(\Rd \times \Rd)$ between $\mu$ and $\nu$, i.e., probability measures on $\Rd \times \Rd$ with first marginal $\mu$ and second marginal $\nu$. As is well known, maximizing the covariance between $p_{1}$ and $p_{2}$ is equivalent to minimizing their expected squared distance; see also \eqref{def_eq_was} below.

\begin{definition} We introduce the \textit{Bass functional}
\begin{equation} \label{def.bass.func}
\PP_{2}(\R^{d}) \ni \alpha \longmapsto \mathcal{V}(\alpha) 
\coloneqq \MCov(\alpha \ast \gamma, \nu) - \MCov(\alpha,\mu).
\end{equation}
\end{definition}

In our first main result we derive a novel formulation of problem \eqref{MBMBB2}, which characterizes the Bass measure $\hat{\alpha}$ in \eqref{eq_def_id_bm} as the optimizer of the Bass functional \eqref{def.bass.func}.

\begin{theorem} \label{prop_alpha_vari_mc} Let $\mu, \nu \in \PP_{2}(\Rd)$ with $\mu \lc \nu$. Then
\begin{equation} \label{eq:variational_alpha_mc}
P(\mu,\nu) 
= \inf_{\alpha \in \P_{2}(\R^{d})} \mathcal{V}(\alpha).
\end{equation}
The right-hand side of \eqref{eq:variational_alpha_mc} is attained by $\hat{\alpha} \in \P_{2}(\R^{d})$ if and only if there is a Bass martingale from $\mu$ to $\nu$ with Bass measure $\hat{\alpha} \in \P_{2}(\R^{d})$.
\end{theorem}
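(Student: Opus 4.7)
The plan is to derive both assertions from a single coupling identity. Take any martingale $M = M_{0} + \int_{0}^{\cdot} \sigma_{s}\,dB_{s}$ with $M_{0} \sim \mu$, $M_{1} \sim \nu$, and any $\alpha \in \P_{2}(\Rd)$. I enlarge the probability space to carry a random variable $Z \sim \alpha$ that is independent of $B$ and Brenier-coupled to $M_{0}$, so that $\E[\langle Z, M_{0} \rangle] = \MCov(\alpha,\mu)$. Setting $\tilde B_{t} \coloneqq Z + B_{t}$, a short Itô computation (using that $Z$ is $\mathcal{F}_{0}$-measurable and $B_{1}$ has zero conditional mean) yields
\begin{equation*}
\E[\langle \tilde B_{1}, M_{1}\rangle] \;=\; \MCov(\alpha,\mu) + \E\Big[\int_{0}^{1} \textnormal{tr}(\sigma_{s})\,ds\Big].
\end{equation*}
Since $(\tilde B_{1}, M_{1})$ is a coupling of $\alpha \ast \gamma$ and $\nu$, the left-hand side is bounded by $\MCov(\alpha \ast \gamma, \nu)$, which rearranges to $\E[\int_{0}^{1} \textnormal{tr}(\sigma_{s})\,ds] \leqslant \mathcal{V}(\alpha)$. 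Taking suprema in $M$ and infima in $\alpha$ delivers $P(\mu,\nu) \leqslant \inf_{\alpha} \mathcal{V}(\alpha)$.

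When a Bass martingale $\hat M_{t} = \nabla \hat v_{t}(B_{t})$ with Bass measure $\hat\alpha$ exists, the same computation with $\tilde B = B$ and $Z = B_{0}$ becomes sharp on both ends: the terminal pair $(B_{1}, \nabla \hat v(B_{1}))$ is the Brenier coupling of $\hat\alpha \ast \gamma$ and $\nu$, while the initial pair $(B_{0}, \nabla \hat v_{0}(B_{0}))$ is the Brenier coupling of $\hat\alpha$ and $\mu$ by convexity of $\hat v_{0} = \hat v \ast \gamma$ together with the Bass identity $(\nabla \hat v_{0})_{\#}\hat\alpha = \mu$. Thus $\mathcal{V}(\hat\alpha) = \E[\int_{0}^{1} \textnormal{tr}(\sigma^{\hat M}_{s})\,ds]$, and combined with the upper bound this squeezes $P(\mu,\nu) = \mathcal{V}(\hat\alpha) = \inf_{\alpha} \mathcal{V}(\alpha)$. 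This handles the ``if'' direction of the attainment equivalence and, via Theorem \ref{MainTheorem}, the variational identity in the irreducible case; I would extend the identity to general $\mu \lc \nu$ by approximating with irreducible pairs (e.g.\ convolving $(\mu,\nu)$ with a small Gaussian, which preserves convex order) and passing to the limit using $\W_{2}$-continuity of $P$ and of the $\MCov$ functional.

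For the converse, suppose $\hat\alpha$ attains $\inf_{\alpha} \mathcal{V}(\alpha)$. Let $\hat v$ be the Brenier potential from $\hat\alpha \ast \gamma$ to $\nu$, set $\hat v_{t} \coloneqq \hat v \ast \gamma^{1-t}$, and define the candidate $\hat M_{t} \coloneqq \nabla \hat v_{t}(B_{t})$ with $B_{0} \sim \hat\alpha$. Since $\hat M_{1} \sim \nu$ automatically, $\hat M$ is a Bass martingale as soon as $(\nabla \hat v_{0})_{\#}\hat\alpha = \mu$, which I would extract from a Wasserstein first-order condition: the ``gradients'' of $\alpha \mapsto \MCov(\alpha \ast \gamma, \nu)$ and $\alpha \mapsto \MCov(\alpha, \mu)$ at $\hat\alpha$ are $\hat v_{0}$ and a Brenier potential $\hat u$ from $\hat\alpha$ to $\mu$, so optimality of $\hat\alpha$ for their difference forces $\nabla \hat v_{0} = \nabla \hat u$ $\hat\alpha$-a.e., whence $(\nabla \hat v_{0})_{\#}\hat\alpha = \hat u_{\#}\hat\alpha = \mu$. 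The principal obstacle will be making this first-order analysis rigorous: $\mathcal{V}$ is a difference of two $\MCov$-terms that are neither convex nor concave in $\alpha$, the minimizer $\hat\alpha$ need not be absolutely continuous, and Brenier potentials enjoy only mild regularity. I expect to deal with this via envelope-theorem arguments based on the Kantorovich dual of $\MCov$, together with the smoothing properties of the heat convolution $\hat v \ast \gamma^{1-t}$, which upgrade $\hat v_{t}$ to a genuinely $C^{\infty}$ convex function for every $t<1$.
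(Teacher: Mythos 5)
Your weak duality argument is correct and genuinely different from the paper's. The paper derives $P(\mu,\nu)\leqslant \inf_\alpha \mathcal V(\alpha)$ (Lemma \ref{prop_alpha_vari_mc_step1}) by passing through the dual functional $D(\mu,\nu)$ of Theorem \ref{theorem_new_duality} and two applications of Kantorovich duality. Your Itô computation sidesteps $D(\mu,\nu)$ entirely: enlarge the initial $\sigma$-algebra by an independent uniform so that $Z\sim\alpha$ is $\mathcal F_0$-measurable and $\MCov$-optimally coupled to $M_0$, set $\tilde B_t=Z+B_t$, and read off
\[
\E[\langle\tilde B_1,M_1\rangle]
=\E[\langle Z,M_0\rangle]+\E\Big[\int_0^1\mathrm{tr}(\sigma_s)\,ds\Big]
\]
after killing the cross terms by conditioning on $\mathcal F_0$ and using the Itô isometry; the left side is $\leqslant\MCov(\alpha*\gamma,\nu)$. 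This is more elementary than the paper's route and makes the role of $\mathcal V$ transparent. Your sharpness argument when a Bass martingale exists (take $Z=B_0$, so that both terminal and initial pairs are Brenier couplings) is also correct and is close in spirit to the paper's ``probabilistic proof of Lemma \ref{prop_alpha_vari_mc_step2}'', where the key observation $\E[Y-X\,|\,Z,\hat Z]=0$ plays the role of your martingale cancellation. One small efficiency: as you note, you do not even need Theorem \ref{MainTheorem} for this direction, because the Bass martingale is an admissible competitor in \eqref{MBMBB2} and so $\mathcal V(\hat\alpha)=\E[\int_0^1\mathrm{tr}(\hat\sigma_s)\,ds]\leqslant P(\mu,\nu)$, which sandwiches against your weak duality.

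For the remaining two steps your plan coincides with the paper's but stays at sketch level where the real work is. For strong duality without irreducibility, the paper (Lemma \ref{prop_alpha_vari_mc_step3}) convolves $\mu,\nu$ with Gaussians exactly as you propose, but the phrase ``$\W_2$-continuity of $P$'' conceals the hard point: $P(\mu^\varepsilon,\nu^\varepsilon)\to P(\mu,\nu)$ is not obvious because the admissible set $\MT(\mu^\varepsilon,\nu^\varepsilon)$ changes, and the paper invokes a nontrivial $\limsup$-stability result from weak optimal transport for the primal side, supplemented by an explicit Lipschitz estimate for the $\MCov$-terms on the dual side. For the converse (minimizer implies Bass measure), your first-order-condition idea is precisely the route of Lemma \ref{prop_alpha_vari_mc_step4}, but the paper's rigorization is itself the content: it perturbs the optimal coupling $(\hat Z,X)$ of $(\hat\alpha,\mu)$ by $Z_u=\hat Z+u\,\boldsymbol w(\hat Z,X)$ with $\boldsymbol w$ smooth compactly supported, proves a $\liminf$ inequality for $\MCov(\alpha_u,\mu)$ from sub-optimality, and proves the matching $\limsup$ for $\MCov(\alpha_u*\gamma,\nu)$ by a Kantorovich upper bound together with Cameron--Martin and dominated convergence — exactly to handle the ``$\mathcal V$ is a difference of two nonsmooth $\MCov$-terms'' obstacle you correctly flag. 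The conclusion $\Law((\nabla\hat v*\gamma)(\hat Z))=\mu$ then follows because $\boldsymbol w$ is arbitrary. So your outline is aimed at the right target, but it is a plan rather than a proof for Lemmas \ref{prop_alpha_vari_mc_step3} and \ref{prop_alpha_vari_mc_step4}; there is nothing essentially wrong, only unexecuted detail, and the detail is where the paper spends its effort.
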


The proof of Theorem \ref{prop_alpha_vari_mc} is given in Section \ref{sec2avcob}. In Section \ref{sec3aivot} we will show the following infinitesimal version of Theorem \ref{prop_alpha_vari_mc}, which constitutes our second main result:

\begin{theorem} \label{theo_ms2} Let $(M_{t})_{0 \leqslant t \leqslant 1}$ be an $\R^{d}$-valued martingale bounded in $L^{2}$, which is given by the stochastic integral
\[
M_{t} = M_{0} + \int_{0}^{t} \sigma_{s} \, dB_{s}, \qquad 0 \leqslant t \leqslant 1,
\]
where $(\sigma_{t})_{0 \leqslant t \leqslant 1}$ is a progressively measurable process. Denote by $\mu_{t}$ the law of $M_{t}$. For Lebesgue-a.e.\ $0 \leqslant t \leqslant 1$ we have, for each $\alpha \in \P_{2}(\R^{d})$, the inequality
\begin{equation} \label{eq_m_theo_ms2}
\mathbb{E}\big[ \textnormal{tr}(\sigma_{t})\big]
\leqslant \liminf_{h \rightarrow 0} \tfrac{1}{h}
\Big( \MCov(\alpha \ast \gamma^{h},\mu_{t+h}) - \MCov(\alpha,\mu_{t}) \Big).
\end{equation}
\end{theorem}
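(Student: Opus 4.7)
I would lower bound $\MCov(\alpha \ast \gamma^{h}, \mu_{t+h})$ by constructing an explicit coupling of $\alpha \ast \gamma^{h}$ with $\mu_{t+h}$, built from an optimal coupling of $(\alpha, \mu_{t})$ combined with the Brownian increment $B_{t+h}-B_{t}$. Pick $\pi^{\ast} \in \Cpl(\alpha,\mu_{t})$ attaining $\MCov(\alpha,\mu_{t})$; existence is standard since $\alpha,\mu_{t} \in \P_{2}(\R^{d})$. Let $(\mathcal{F}_{s})$ denote the filtration carrying $B$ and $(\sigma_{s})$. After enlarging the probability space by an independent uniform random variable $U$, realize $X = g(M_{t},U)$ with $(X,M_{t}) \sim \pi^{\ast}$, and set $\mathcal{G}_{s} \coloneqq \mathcal{F}_{s} \vee \sigma(U)$. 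Since $U$ is independent of $\mathcal{F}_{\infty}$, $B$ remains a Brownian motion for $(\mathcal{G}_{s})$, $(\sigma_{s})$ stays progressive, $X \in \mathcal{G}_{t}$, and $Z \coloneqq B_{t+h}-B_{t}$ is independent of $\mathcal{G}_{t}$ with law $\gamma^{h}$. Then $\hat{X} \coloneqq X+Z$ is distributed according to $\alpha \ast \gamma^{h}$, so $(\hat{X}, M_{t+h}) \in \Cpl(\alpha \ast \gamma^{h}, \mu_{t+h})$.

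\textbf{Computing the covariance.} Writing $\Delta M \coloneqq M_{t+h}-M_{t} = \int_{t}^{t+h} \sigma_{s}\,dB_{s}$, I would expand
\[
\E\big[\langle \hat{X}, M_{t+h}\rangle\big]
= \E[\langle X, M_{t}\rangle] + \E[\langle X, \Delta M\rangle] + \E[\langle Z, M_{t}\rangle] + \E[\langle Z, \Delta M\rangle].
\]
The first term equals $\MCov(\alpha,\mu_{t})$ by choice of $\pi^{\ast}$. Since $\E[\Delta M \,|\, \mathcal{G}_{t}]=0$ and $X \in \mathcal{G}_{t}$, the second term vanishes; the third vanishes because $Z$ is centered and independent of $M_{t}$. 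For the last, the polarized It\^o isometry applied to $Z = \int_{t}^{t+h} I_{d}\,dB_{s}$ and $\int_{t}^{t+h}\sigma_{s}\,dB_{s}$ produces
\[
\E[\langle Z, \Delta M\rangle] = \E\Big[\int_{t}^{t+h} \textnormal{tr}(\sigma_{s})\,ds\Big] = \int_{t}^{t+h} \E[\textnormal{tr}(\sigma_{s})]\,ds.
\]
Combining and using $\MCov(\alpha \ast \gamma^{h}, \mu_{t+h}) \geqslant \E[\langle \hat{X}, M_{t+h}\rangle]$ yields
\[
\MCov(\alpha \ast \gamma^{h}, \mu_{t+h}) - \MCov(\alpha,\mu_{t}) \geqslant \int_{t}^{t+h} \E[\textnormal{tr}(\sigma_{s})]\,ds.
\]

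\textbf{Passing to the limit; main obstacle.} Dividing by $h > 0$ and taking $\liminf_{h \downarrow 0}$, one invokes the Lebesgue differentiation theorem for the $L^{1}$ function $s \mapsto \E[\textnormal{tr}(\sigma_{s})]$ (integrability follows from $\E[\int_{0}^{1}|\sigma_{s}|_{\textnormal{HS}}^{2}\,ds] < \infty$ together with $|\textnormal{tr}(\sigma_{s})| \leqslant \sqrt{d}\,|\sigma_{s}|_{\textnormal{HS}}$), so that for a.e.\ $t$ the right-hand side limit equals $\E[\textnormal{tr}(\sigma_{t})]$, giving \eqref{eq_m_theo_ms2}. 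The main obstacle I expect is the careful book-keeping for the filtration enlargement, namely that adjoining $U$ leaves $B$ a Brownian motion, preserves $\E[\Delta M \,|\, \mathcal{G}_{t}]=0$, and keeps $Z$ independent of $X$ with the correct Gaussian law; once this is in place, the rest reduces to a standard It\^o-isometry computation and a classical Lebesgue differentiation step.
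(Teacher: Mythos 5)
Your proof is correct, and it takes a genuinely different and more elementary route than the paper's. The paper argues indirectly: it applies the variational identity $P(\mu,\nu)=\inf_{\alpha}\mathcal{V}(\alpha)$ of Theorem \ref{prop_alpha_vari_mc} on each subinterval of a partition $\Pi$, organizes the resulting quantities $m^{\Pi}$ into a nonincreasing net over refinements, passes to a limit measure $m$ with Lebesgue density $g$, and then chains two separate inequalities, $\E[\textnormal{tr}(\sigma_{t})]\leqslant g(t)$ and $g(t)\leqslant\liminf_{h\to 0}\tfrac{1}{h}\big(\MCov(\alpha\ast\gamma^{h},\mu_{t+h})-\MCov(\alpha,\mu_{t})\big)$. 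Your argument bypasses the duality machinery entirely: it uses only the trivial half of the definition of $\MCov$ (any admissible coupling yields a lower bound) and exhibits an explicit coupling of $\alpha\ast\gamma^{h}$ with $\mu_{t+h}$ by gluing an optimal coupling of $(\alpha,\mu_{t})$ --- realized through an auxiliary uniform variable on an enlarged filtration --- onto the Brownian increment $B_{t+h}-B_{t}$. The polarized It\^o isometry then produces the lower bound $\int_{t}^{t+h}\E[\textnormal{tr}(\sigma_{s})]\,ds$ directly (note $\mu_{t}\in\P_{2}(\R^{d})$ since $M$ is $L^{2}$-bounded, so the optimal coupling $\pi^{\ast}$ exists and all inner products are integrable), and Lebesgue differentiation of the $L^{1}$ map $s\mapsto\E[\textnormal{tr}(\sigma_{s})]$ finishes. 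This route also makes the $\alpha$-uniformity of the exceptional null set transparent, since your lower bound does not depend on $\alpha$, whereas in the paper this uniformity is mediated through the $\alpha$-free density $g(t)$. The filtration-enlargement bookkeeping you flag is indeed the only delicate point, and it works exactly as you say: adjoining a uniform variable independent of $\mathcal{F}_{\infty}$ preserves the Brownian property of $B$, keeps $(\sigma_{s})$ progressive, leaves the stochastic integral $\int\sigma\,dB$ unchanged, and makes $B_{t+h}-B_{t}$ independent of $\mathcal{G}_{t}\ni X$.
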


We note that, for a Bass martingale $(\hat{M}_{t})_{0 \leqslant t \leqslant 1}$ of the form
\[
d\hat{M}_{t} = \hat{\sigma}_{t}(\hat{M}_{t}) \, dB_{t},
\]
with associated Bass measure $\hat{\alpha} \in \P_{2}(\R^{d})$ and diffusion function $\hat{\sigma}_{t} \colon \Rd \rightarrow \R^{d \times d}$, we have, for Lebesgue-a.e.\ $0 \leqslant t \leqslant 1$, the equality
\[
\mathbb{E}\big[ \textnormal{tr}\big(\hat{\sigma}_{t}(\hat{M}_{t})\big)\big]
= \frac{d}{dt} \, \MCov(\hat{\alpha} \ast \gamma^{t},\hat{\mu}_{t}),
\]
where $\hat{\mu}_{t} = \Law(\hat{M}_{t})$. This exhibits the sharpness of \eqref{eq_m_theo_ms2} and shows that Theorem \ref{theo_ms2} is an infinitesimal analogue of Theorem \ref{prop_alpha_vari_mc}. 

\medskip

In our final main result we discuss convexity properties of the Bass functional $\alpha \mapsto \mathcal{V}(\alpha)$ defined in \eqref{def.bass.func}.


\begin{theorem} We have the following results:
\begin{enumerate}[label=(\arabic*)] 
\item If $d = 1$, then $\mathcal{V}$ is displacement convex, i.e., convex along the geodesics given by McCann interpolations \cite{McCgas}.
\item If $d \geqslant 1$, then $\mathcal{V}$ is displacement convex along generalized geodesics with base $\mu$.
\end{enumerate}
\end{theorem}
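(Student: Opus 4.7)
The plan is to split $\mathcal{V}(\alpha) = \MCov(\alpha \ast \gamma, \nu) - \MCov(\alpha,\mu)$ into two pieces and show, along the relevant interpolation, that the second term is \emph{affine} in $t$ while the first term is \emph{convex} in $t$. The decisive step is treating the first term; the second is essentially book-keeping.

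\medskip

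\noindent\textbf{Convexity of the convolved term.} In both settings the interpolation has the form $\alpha_t = ((1-t)S_0 + tS_1)_{\#}\eta$, where $\eta$ is the base measure and $(S_i)_{\#}\eta = \alpha_i$ (for $d=1$ McCann take $\eta = \alpha_0$, $S_0 = \mathrm{id}$, $S_1 = T$; for base $\mu$ take $\eta = \mu$, $S_i = T_i$ the Brenier map from $\mu$ to $\alpha_i$). Put $X \sim \eta$, $Z \sim \gamma$ independent, and $A_t \coloneqq (1-t)S_0(X) + tS_1(X) + Z$, so that $A_t \sim \alpha_t \ast \gamma$. Every coupling $q \in \Cpl(\alpha_t \ast \gamma,\nu)$ can be realised on an enriched probability space carrying $(X,Z,W)$ with $W \sim \nu$ and $W$ depending on $(X,Z)$ only through $A_t$, and conversely every such joint $\sigma$ induces a coupling. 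Therefore
\[
\MCov(\alpha_t \ast \gamma,\nu) \;=\; \sup_{\sigma}\, \E^{\sigma}\big[\langle A_t, W\rangle\big],
\]
the supremum being taken over laws $\sigma$ of $(X,Z,W)$ with prescribed marginals $\eta,\gamma,\nu$ and $X \perp Z$. For each fixed $\sigma$, $t \mapsto \E^{\sigma}[\langle A_t, W\rangle] = (1-t)\E^{\sigma}[\langle S_0(X)+Z, W\rangle] + t\,\E^{\sigma}[\langle S_1(X)+Z, W\rangle]$ is affine, so the supremum is convex in $t$.

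\medskip

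\noindent\textbf{Affinity of the direct term (generalized geodesics with base $\mu$).} By Brenier's theorem, $T_i = \nabla \phi_i$ for convex $\phi_i$, hence $(1-t)T_0 + tT_1 = \nabla\big((1-t)\phi_0 + t\phi_1\big)$ is still the gradient of a convex function, and therefore is itself the Brenier map from $\mu$ to $\alpha_t$. Consequently
\[
\MCov(\alpha_t,\mu) = \int \big\langle ((1-t)T_0 + tT_1)(y),\, y\big\rangle\,d\mu(y) = (1-t)\MCov(\alpha_0,\mu) + t\,\MCov(\alpha_1,\mu),
\]
which is affine in $t$. Combined with the previous step this proves (2).

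\medskip

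\noindent\textbf{Affinity of the direct term ($d=1$ McCann).} In one dimension, McCann interpolation satisfies $F^{-1}_{\alpha_t} = (1-t)F^{-1}_{\alpha_0} + tF^{-1}_{\alpha_1}$ on $(0,1)$. Since maximal covariance between one-dimensional measures is given by $\MCov(p_1,p_2) = \int_0^1 F^{-1}_{p_1}(u)F^{-1}_{p_2}(u)\,du$, linearity of the integral gives $t \mapsto \MCov(\alpha_t,\mu)$ affine. Together with the general convexity of $t \mapsto \MCov(\alpha_t \ast \gamma,\nu)$ (applied with base $\eta = \alpha_0$), this yields (1).

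\medskip

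\noindent The conceptually non-trivial point is the lifting argument that exhibits $\MCov(\alpha_t \ast \gamma,\nu)$ as a supremum of functionals which are affine in the \emph{interpolation parameter}: the key is that $A_t$ is itself affine in $t$ on the enriched space, so the Kantorovich functional inherits this affinity pointwise before the supremum is taken. I expect the main subtlety to be verifying that the lifted supremum is attained over the same class as the usual Kantorovich problem, i.e.\ that every coupling of $\alpha_t \ast \gamma$ and $\nu$ indeed arises from an admissible joint $\sigma$; this is handled by a standard disintegration.
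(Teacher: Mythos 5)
Your proof is correct, and it packages the argument differently from the paper. The paper works with $\mathcal{U}(\alpha)=\mathcal{W}_2^2(\alpha,\mu)-\mathcal{W}_2^2(\alpha\ast\gamma,\nu)$ and, fixing $u_0$, builds an explicit affine minorant of $u\mapsto\mathcal{U}(\alpha_u)$ touching at $u_0$: it writes $\mathcal{W}_2^2(\alpha_u,\mu)-\mathcal{W}_2^2(\alpha_{u_0},\mu)$ exactly (using the optimality of $(Z_u,X)$ for all $u$, via comonotonicity in $d=1$ and \cite[Lemma 9.2.1]{AGS08} for generalized geodesics), bounds $\mathcal{W}_2^2(\alpha_u\ast\gamma,\nu)-\mathcal{W}_2^2(\alpha_{u_0}\ast\gamma,\nu)$ from above by reusing the optimal coupling $(Z_{u_0}+\Gamma,Y_{u_0})$ as a suboptimal coupling at time $u$, observes that the $(u-u_0)^2$ terms cancel, and kills the cross term with $\E[\langle Z_1-Z_0,\Gamma\rangle]=0$ by independence. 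Your decomposition into an \emph{affine} piece $t\mapsto\MCov(\alpha_t,\mu)$ and a \emph{convex} piece $t\mapsto\MCov(\alpha_t\ast\gamma,\nu)=\sup_\sigma\E^\sigma[\langle A_t,W\rangle]$ exposes the same two inputs in a cleaner way: working with $\MCov$ rather than $\mathcal{W}_2^2$ means there is no quadratic $|Z_u|^2$ term to cancel, and the lifting of the Kantorovich supremum to a $t$-independent family of joint laws on the enriched space (so that each competitor is affine in $t$) does globally what the paper does pointwise with a subgradient at $u_0$. The lifting step is the genuinely nonobvious one and you handle it correctly by the disintegration argument. One small thing to tidy up: you phrase both interpolations as push-forwards by \emph{maps} ($\alpha_t=((1-t)S_0+tS_1)_\#\eta$ with $S_i=T_i$ Brenier maps, resp.\ $\eta=\alpha_0$, $S_1=T$), which implicitly assumes $\mu$ (resp.\ $\alpha_0$) absolutely continuous. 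The definition of a generalized geodesic with base $\mu$ used in the paper, and McCann interpolation in $d=1$, only require \emph{optimal couplings} $(Z_0,X)$, $(Z_1,X)$ with $Z_u:=(1-u)Z_0+uZ_1\sim\alpha_u$ (resp.\ comonotone $Z_0,Z_1,X$); your argument goes through verbatim in this coupling formulation, with $A_t:=Z_t+\Gamma$ defined on the common probability space, so this is a presentational gap rather than a mathematical one, but it should be stated in that generality.
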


The proof of this result, together with a discussion on the various forms of convexity stated therein (see e.g.\ \cite{AGS08, Vi03, McCgas}), and a treatment of the strict convexity of $\mathcal{V}$, are deferred to Section \ref{sec4dcotbf}. We merely stress here that the Bass functional fails to be convex, and can even be concave, if we consider convex combinations of measures in the usual linear sense.

\subsection{Related literature} Optimal transport as a field in mathematics goes back to Monge \cite{Mo81} and Kantorovich \cite{Ka42}, who established its modern formulation. The seminal results of Benamou, Brenier, and McCann \cite{Br87, Br91, BeBr99, Mc94, Mc95} form the basis of the modern theory, with striking applications in a variety of different areas, see the monographs \cite{Vi03, Vi09, AmGi13, Sa15}. 

\smallskip

We are interested in transport problems where the transport plan satisfies an additional martingale constraint. This additional requirement arises naturally in finance (e.g.\ \cite{BeHePe12}), but is of independent mathematical interest. For example there are notable consequences for the study of martingale inequalities (e.g.\ \cite{BoNu13,HeObSpTo12,ObSp14}) and the Skorokhod embedding problem (e.g.\ \cite{BeCoHu14, KaTaTo15, BeNuSt19}). Early articles on this topic of \textit{martingale optimal transport} include \cite{HoNe12, BeHePe12, TaTo13, GaHeTo13, DoSo12, CaLaMa14}. The study of irreducibility of a pair of marginals $(\mu,\nu)$ was initiated by Beiglb\"ock and Juillet \cite{BeJu16} in dimension one and extended in the works \cite{GhKiLi19,DeTo17,ObSi17} to multiple dimensions.

\smallskip

Continuous-time martingale optimal transport problems have received much attention in the recent years; see e.g.\ \cite{BeHeTo15, CoObTo19, GhKiPa19,GuLoWa19, GhKiLi20, ChKiPrSo20, GuLo21}. In this paper we concern ourselves with the specific structure given by the martingale Benamou--Brenier problem, introduced in \cite{BaBeHuKa20} in probabilistic language and in \cite{HuTr17} in PDE language, and subsequently studied through the point of view of duality theory in \cite{BBST23}. In the context of market impact in finance, the same kind of problem appeared independently in a work by Loeper \cite{Lo18}. 

\smallskip

It was also shown in \cite{BaBeHuKa20} that the optimizer $\hat{M}$ of the problem \eqref{MBMBB} is the process whose evolution follows the movement of Brownian motion as closely as possible with respect to an \textit{adapted Wasserstein distance} (see e.g.\ \cite{BaBaBeEd19a, Fo22a}) subject to the given marginal constraints.



{\small \tableofcontents}

\section{Preliminaries}

\noindent In this short section we give a more detailed review of some of the main results in \cite{BBST23}, which will be useful for the coming discussions and proofs.

\subsection{Dual viewpoint}

As established in \cite[Theorem 1.4]{BBST23}, the problem \eqref{MBMBB2} admits a dual formulation with a particularly appealing structure:

\begin{theorem} \label{theorem_new_duality} Let $\mu, \nu \in \PP_{2}(\Rd)$ with $\mu \lc \nu$. The value $P(\mu,\nu)$ of the problem \eqref{MBMBB2} is equal to 
\begin{equation} \label{WeakDual}
D(\mu,\nu) \coloneqq \inf_{\substack{\psi \in L^{1}(\nu), \\ \textnormal{$\psi$ convex}}} \Big( \int \psi \, d\nu - \int (\psi^{\ast} \ast \gamma)^{\ast} \, d \mu \Big)
\end{equation}
and is attained by a convex function $\hat{\psi}$ if and only if $(\mu,\nu)$ is irreducible. In this case, the (unique) optimizer to \eqref{MBMBB}, \eqref{MBMBB2} is given by the Bass martingale with associated convex function $\hat{v} = \hat{\psi}^{\ast}$ and Bass measure $\hat{\alpha} = \nabla (\hat{\psi}^{\ast} \ast \gamma)^{\ast}(\mu)$.
\end{theorem}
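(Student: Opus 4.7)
The plan is to follow the convex-duality framework of \cite[Theorem 1.4]{BBST23}, splitting the argument into weak duality, strong duality, existence of a dual optimizer under irreducibility, and reconstruction of the Bass martingale. For weak duality $P(\mu,\nu) \leqslant D(\mu,\nu)$, fix a convex test function $\psi \in L^{1}(\nu)$ and set $v := \psi^{\ast}$, $v_{0} := v \ast \gamma$, $\phi := v_{0}^{\ast}$. Given an admissible martingale $M$ with $dM_{t} = \sigma_{t}\, dB_{t}$, $M_{0} \sim \mu$, $M_{1} \sim \nu$, $B_{0} = 0$, and $M_{0}$ independent of $B$, introduce the auxiliary Brownian motion $W_{t} := \nabla \phi(M_{0}) + B_{t}$. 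The choice $W_{0} = \nabla \phi(M_{0})$ realizes the Fenchel--Young equality $\langle M_{0}, W_{0}\rangle = \phi(M_{0}) + v_{0}(W_{0})$. Combining this with the Fenchel--Young inequality $\langle M_{1}, W_{1}\rangle \leqslant \psi(M_{1}) + v(W_{1})$, with the identity $\E[v(W_{1}) \, \vert \, \mathcal{F}_{0}] = v_{0}(W_{0})$ (by the very definition of $v_{0}$), and with the It\^o computation $\E[\langle M_{1}, W_{1}\rangle - \langle M_{0}, W_{0}\rangle] = \E[\int_{0}^{1} \textnormal{tr}(\sigma_{t})\, dt]$ (which uses $W_{0} \in \mathcal{F}_{0}$ together with the martingale property of $M$), one obtains $\E[\int_{0}^{1} \textnormal{tr}(\sigma_{t})\, dt] \leqslant \int \psi\, d\nu - \int \phi\, d\mu$.

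For strong duality $P = D$ and existence of a dual optimizer under irreducibility, observe first that the functional $\psi \mapsto \int \psi\, d\nu - \int \phi\, d\mu$ is invariant under the shift $\psi \mapsto \psi + \ell$ for affine $\ell$, since $\mu$ and $\nu$ share the same barycenter. A minimizing sequence can therefore be normalized (for instance, $\psi_{n}(y_{0}) = 0$ and $0 \in \partial \psi_{n}(y_{0})$ at a fixed $y_{0}$ in the relative interior of $\supp(\nu)$). Convexity produces a pointwise limit $\hat\psi$ along a subsequence; irreducibility of $(\mu,\nu)$ is used to prevent degeneration --- in particular to ensure finiteness of $\hat\psi$ on a set of full $\nu$-mass --- while lower semicontinuity of the dual functional identifies $\hat\psi$ as a minimizer. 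Primal-dual equality then follows either by a Fenchel--Rockafellar argument in the convex-analytic setup, or by direct comparison with the known primal optimizer (stretched Brownian motion, \cite[Theorem 1.5]{BaBeHuKa20}).

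For the final identification, set $\hat v := \hat\psi^{\ast}$, $\hat\alpha := \nabla(\hat v \ast \gamma)^{\ast}(\mu)$, and consider the Bass martingale of Definition \ref{def:BassMarti_intro} built from these data. Complementary slackness --- the weak-duality chain must saturate along the optimizer --- forces the relevant Fenchel--Young inequalities to be equalities $\mathbb{P}$-a.s., which identifies the optimal $M$ with this Bass martingale and yields the prescribed marginals. The hardest part is the existence of the dual optimizer: extracting a convex limit demands precise a priori bounds on the minimizing sequence $(\psi_{n})$, and preventing escape of mass toward the boundary of $\supp(\nu)$ is exactly where the irreducibility assumption becomes indispensable, and exactly what fails when $(\mu,\nu)$ is not irreducible.
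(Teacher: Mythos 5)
This theorem is not proved in the present paper: it is recalled verbatim from the companion work (see the attribution to \cite[Theorem~1.4]{BBST23} that immediately precedes its statement), and the paper also notes, right after the theorem, that attainment of $D(\mu,\nu)$ must be understood in a ``relaxed'' sense since $\hat\psi$ need not be $\nu$-integrable (\cite[Proposition~4.2]{BBST23}). So there is no in-paper proof to compare against; what follows is an assessment of your sketch on its own terms.

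Your weak-duality computation is correct and is essentially the argument one would expect. The coupling $W_t := \nabla\phi(M_0) + B_t$, with $\phi = (\psi^* * \gamma)^*$, is exactly calibrated so that the Fenchel--Young bound at $t=0$ is an equality (because $W_0 \in \partial\phi(M_0)$), the heat-semigroup identity $\E[\psi^*(W_1)\mid \mathcal F_0] = (\psi^**\gamma)(W_0)$ cancels against the $t=0$ term, and the It\^o covariation of $M$ with $W$ produces $\E[\int_0^1 \operatorname{tr}(\sigma_t)\,dt]$. One should flag the tacit regularity assumptions: $\nabla\phi$ must exist $\mu$-a.e.\ and be square-integrable, and the stochastic integrals appearing in the product rule must be genuine martingales so that their expectations vanish; these are standard but not free.

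The strong-duality and attainment portion is where the genuine content lies, and your treatment is only a placeholder. ``Normalize, pass to a convex limit, and use irreducibility to prevent degeneration'' correctly names the ingredients but elides the actual obstructions: (i) a minimizing sequence $(\psi_n)$ need not stay in $L^1(\nu)$, so the limiting $\hat\psi$ may fail to be $\nu$-integrable and one must work with the relaxed notion of attainment alluded to above; (ii) lower semicontinuity of $\psi \mapsto \int\psi\,d\nu - \int(\psi^**\gamma)^*\,d\mu$ is not automatic because the second term involves a double Legendre transform and a Gaussian smoothing, and controlling it under the convergence of $\psi_n$ requires care; (iii) the ``only if'' direction (non-attainment when $(\mu,\nu)$ is reducible) is not addressed at all, yet it is part of the statement. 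These are precisely the points where \cite{BBST23} invests most of its effort, and your sketch would need to be substantially fleshed out there to constitute a proof. The final complementary-slackness identification of the optimizer as a Bass martingale is the right idea and, granted the dual optimizer exists in the appropriate relaxed sense, does go through.
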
 

Note that the symbol $\ast$ used as a superscript denotes the convex conjugate of a function. We also remark that attainment of $D(\mu,\nu)$ has to be understood in a ``relaxed'' sense, since the optimizer $\hat{\psi}$ is not necessarily $\nu$-integrable; see \cite[Proposition 4.2]{BBST23}.

\subsection{Static martingale optimal transport} \label{subs_smot}

We fix $\mu, \nu \in \PP_{2}(\Rd)$ with $\mu \lc \nu$ and consider a static / discrete-time version of the continuous-time martingale optimization problem \eqref{MBMBB2}, to wit
\begin{equation} \label{eq_primal} 
\tilde{P}(\mu,\nu) 
\coloneqq  \sup_{\pi \in \MT(\mu,\nu)} \int \MCov(\pi_{x},\gamma) \, \mu(dx).
\end{equation}
The collection of martingale transports $\MT(\mu,\nu)$ consists of those couplings $\pi \in \Cpl(\mu,\nu)$ that satisfy $\bary(\pi_{x}) \coloneqq \int y \, \pi_{x}(dy) = x$, for $\mu$-a.e.\ $x \in \Rd$. Here, the family of probability measures $\{\pi_{x}\}_{x\in\Rd} \subseteq \PP_{2}(\Rd)$ is obtained by disintegrating the coupling $\pi$ with respect to its first marginal $\mu$, i.e., $\pi(dx,dy) = \pi_{x}(dy) \, \mu(dx)$.

\smallskip

By \cite[Theorem 2.2]{BaBeHuKa20} the value $\tilde{P}(\mu,\nu)$ of \eqref{eq_primal} is finite and equals $P(\mu,\nu)$, as defined in \eqref{MBMBB2}. Furthermore, there exists a unique optimizer $\hat{\pi} \in \MT(\mu,\nu)$ of \eqref{eq_primal} and if $(\hat{M}_{t})_{0 \leqslant t \leqslant 1}$ is the stretched Brownian motion from $\mu$ to $\nu$, then the law of $(\hat{M}_{0},\hat{M}_{1})$ equals $\hat{\pi}$.

\smallskip

As already alluded to, maximizing the maximal covariance is equivalent to minimizing the squared quadratic Wasserstein distance, modulo adding constants. More precisely, in the present setting we have the relation
\[
\inf_{\pi \in \MT(\mu,\nu)}  \int \W_{2}^{2}(\pi_{x},\gamma) \, \mu(dx) 
= d + \int \vert y \vert^{2} \, d\nu(y) -2 \tilde{P}(\mu,\nu),
\]
where the quadratic Wasserstein distance $\W_{2}(\, \cdot \, , \, \cdot \,)$ between two probability measures $p_{1},p_{2} \in \PP_{2}(\Rd)$ is defined as
\begin{equation} \label{def_eq_was}
\W_{2}(p_{1},p_{2}) \coloneqq \sqrt{\inf_{q \in \Cpl(p_{1},p_{2})} \int \vert x_{1} - x_{2}\vert^{2} \, q(dx_{1},dx_{2})}.
\end{equation}
In these terms, the value of \eqref{MBMBB} can be expressed as
\[
MT(\mu,\nu) 
= \inf_{\pi \in \MT(\mu,\nu)}  \int \W_{2}^{2}(\pi_{x},\gamma) \, \mu(dx) 
- \int \vert x \vert^{2} \, d\mu(x).
\]

\subsection{Structure of optimizers}

From \cite[Theorem 6.6]{BBST23} we recall the following characterization of the dual optimizer $\hat{\psi}$ of \eqref{WeakDual} and of the primal optimizer $\hat{\pi} \in \MT(\mu,\nu)$ of \eqref{eq_primal}.

\begin{lemma} \label{theo_du_op_b_m} Let $\mu, \nu \in \PP_{2}(\Rd)$ with $\mu \lc \nu$. Suppose that a Bass martingale $(\hat{M}_{t})_{0 \leqslant t \leqslant 1}$ from $\mu$ to $\nu$ with Bass measure $\hat{\alpha} \in \P(\R^{d})$ and associated convex function $\hat{v}$ exists. Then the Legendre transform $\hat{v}^{\ast}$ is equal to the dual optimizer $\hat{\psi}$ of \eqref{WeakDual} and $\Law(\hat{M}_{0},\hat{M}_{1})$ is equal to the primal optimizer $\hat{\pi}$ of \eqref{eq_primal}. Furthermore, we have $\hat{\alpha} = \nabla \hat{\varphi}(\mu)$, where
\begin{equation} \label{eq_du_op_b_m_xi_fin}
\nabla \hat{\varphi}(x) 
= (\nabla \hat{v} \ast \gamma)^{-1}(x)
= \nabla(\hat{v} \ast \gamma)^{\ast}(x),
\end{equation}
for $\mu$-a.e.\ $x \in \Rd$, and 
\begin{equation} \label{eq_id_pisbmx_v_xi}
\hat{\pi}_{x} = \Law(\hat{M}_{1} \, \vert \, \hat{M}_{0} = x) 
= \nabla \hat{v}(\gamma_{\nabla \hat{\varphi}(x)}),
\end{equation}
where $\gamma_{\nabla \hat{\varphi}(x)}$ denotes the $d$-dimensional Gaussian distribution with barycenter $\nabla \hat{\varphi}(x)$ and covariance matrix $I_{d}$.
\end{lemma}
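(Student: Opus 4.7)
The plan is to verify by direct computation that the convex function $\hat{v}^{\ast}$ attains the infimum in the dual problem \eqref{WeakDual}; optimality of $\Law(\hat{M}_{0},\hat{M}_{1})$ in \eqref{eq_primal} will then follow from weak duality together with the uniqueness results recalled in Section~\ref{subs_smot}, and the identities \eqref{eq_du_op_b_m_xi_fin}--\eqref{eq_id_pisbmx_v_xi} will be read off from the defining structure of the Bass martingale.

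First I would compute the dual candidate value at $\psi = \hat{v}^{\ast}$. Setting $\hat{v}_{0} \coloneqq \hat{v} \ast \gamma$ and using that $(\hat{v}^{\ast})^{\ast} = \hat{v}$ for $\hat{v}$ convex and lower semicontinuous, the expression in \eqref{WeakDual} becomes $\int \hat{v}^{\ast} \, d\nu - \int \hat{v}_{0}^{\ast} \, d\mu$. Substituting the marginal identities $\nu = \nabla \hat{v}(\hat{\alpha} \ast \gamma)$ and $\mu = \nabla \hat{v}_{0}(\hat{\alpha})$ from \eqref{eq_def_id_bm} and invoking the Legendre relations $\hat{v}^{\ast}(\nabla \hat{v}(z)) = \langle z, \nabla \hat{v}(z)\rangle - \hat{v}(z)$ (and analogously for $\hat{v}_{0}$), the terms $\int \hat{v} \, d(\hat{\alpha} \ast \gamma) = \int \hat{v}_{0} \, d\hat{\alpha}$ cancel by the very definition of convolution, reducing the dual candidate to
\[
\int \langle z, \nabla \hat{v}(z)\rangle \, (\hat{\alpha} \ast \gamma)(dz) - \int \langle z, \nabla \hat{v}_{0}(z)\rangle \, \hat{\alpha}(dz).
\]

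Second, I would compute the primal value attained by the Bass martingale itself. Since $\hat{v}_{t} \coloneqq \hat{v} \ast \gamma^{1-t}$ satisfies the backward heat equation $\partial_{t} \hat{v}_{t} = -\tfrac{1}{2} \Delta \hat{v}_{t}$, Itô's formula gives $\hat{M}_{t} = \nabla \hat{v}_{t}(B_{t})$ with diffusion coefficient $\sigma_{t} = D^{2} \hat{v}_{t}(B_{t})$ and vanishing drift. Itô's product rule applied to $\langle \hat{M}_{t}, B_{t}\rangle$ then yields $\E[\langle \hat{M}_{1}, B_{1}\rangle - \langle \hat{M}_{0}, B_{0}\rangle] = \E\bigl[\int_{0}^{1} \textnormal{tr}(\sigma_{t}) \, dt\bigr]$. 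Because $B_{0} \sim \hat{\alpha}$ and $B_{1} \sim \hat{\alpha} \ast \gamma$, the left-hand side coincides exactly with the expression displayed above. Hence the Bass martingale attains a value in \eqref{MBMBB2} equal to that of the dual candidate $\hat{v}^{\ast}$, and weak duality $P(\mu,\nu) \leqslant D(\mu,\nu)$ forces equality; consequently $\hat{v}^{\ast} = \hat{\psi}$, the Bass martingale realizes the supremum in \eqref{MBMBB2}, and by the uniqueness of stretched Brownian motion we obtain $\Law(\hat{M}_{0}, \hat{M}_{1}) = \hat{\pi}$. For \eqref{eq_du_op_b_m_xi_fin}, Gaussian smoothing makes $\hat{v}_{0} = \hat{v} \ast \gamma$ strictly convex, so $\nabla \hat{v}_{0}$ is injective with inverse $\nabla (\hat{v}\ast\gamma)^{\ast} = \nabla \hat{\varphi}$; inverting $\mu = \nabla \hat{v}_{0}(\hat{\alpha})$ gives $\hat{\alpha} = \nabla \hat{\varphi}(\mu)$. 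Formula \eqref{eq_id_pisbmx_v_xi} follows by conditioning: injectivity of $\nabla \hat{v}_{0}$ forces $B_{0} = \nabla \hat{\varphi}(x)$ on the event $\{\hat{M}_{0} = x\}$, and the independent increment $B_{1} - B_{0} \sim \gamma$ then yields $\Law(B_{1} \mid \hat{M}_{0} = x) = \gamma_{\nabla\hat{\varphi}(x)}$; pushing forward by $\nabla \hat{v}$ concludes.

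The main obstacle is the integrability issue flagged after \eqref{WeakDual}: the candidate $\hat{v}^{\ast}$ need not be $\nu$-integrable, so the attainment in the dual has to be read in the relaxed sense of \cite[Proposition 4.2]{BBST23}, and one must check that the Legendre manipulations in the first step still make sense (typically by pairing $\hat{v}^{\ast}$ against its subgradient term $\langle z, \nabla \hat{v}(z)\rangle - \hat{v}(z)$ to keep all quantities integrable). A parallel subtlety is to justify the Itô computation rigorously: one needs $B_{0} \in L^{2}$ (which follows from $\Law(\hat{M}_{0}) = \mu \in \P_{2}$ combined with the bi-Lipschitz-like control provided by $\nabla \hat{v}_{0}$ and $\nabla \hat{\varphi}$) and one needs the stochastic-integral terms appearing after Itô's product rule to be genuine martingales, which is handled by a standard localization / truncation argument.
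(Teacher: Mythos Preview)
The paper does not prove this lemma; it is quoted verbatim from \cite[Theorem 6.6]{BBST23}. So there is no ``paper's proof'' to compare with, and your direct verification---showing that the dual candidate $\hat v^{\ast}$ and the primal candidate $\hat M$ achieve the same value, then invoking weak duality---is a perfectly reasonable self-contained route. In fact your computation is essentially the stochastic-calculus counterpart of the analytic calculation the paper carries out later in Lemma \ref{prop_alpha_vari_mc_step2}: both reduce the common value to $\MCov(\hat\alpha\ast\gamma,\nu)-\MCov(\hat\alpha,\mu)$, you via It\^o's product rule on $\langle \hat M_t,B_t\rangle$, the paper via Brenier's theorem applied to $\hat\pi_x=\nabla\hat v(\gamma_{\nabla\hat\varphi(x)})$.

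There is, however, a genuine gap in your integrability claims. You assert that $B_0\in L^2$ ``follows from $\Law(\hat M_0)=\mu\in\PP_2$ combined with the bi-Lipschitz-like control provided by $\nabla\hat v_0$ and $\nabla\hat\varphi$''. No such control is available: the lemma is stated for $\hat\alpha\in\PP(\Rd)$, not $\PP_2(\Rd)$, and the paper itself stresses (see the discussion following Definition \ref{def:BassMarti_intro} and the remark after Theorem \ref{prop_alpha_vari_mc}; cf.\ \cite[Example 6.7]{BBST23}) that the Bass measure may fail to have moments of \emph{any} order, even when $\mu,\nu$ are compactly supported. Without $\hat\alpha\in\PP_2$ the terms $\E[\langle\hat M_0,B_0\rangle]$ and $\int\langle z,\nabla\hat v_0(z)\rangle\,\hat\alpha(dz)$ need not be finite, the stochastic integral $\int_0^t\langle B_s,d\hat M_s\rangle$ need not be a true martingale, and the Legendre cancellation $\int\hat v\,d(\hat\alpha\ast\gamma)=\int\hat v_0\,d\hat\alpha$ may be $\infty-\infty$. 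Your argument therefore proves the lemma only under the extra hypothesis $\hat\alpha\in\PP_2(\Rd)$; extending it to the stated generality requires the relaxed duality framework of \cite[Section 4]{BBST23} rather than a localization argument.

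A smaller point: ``Gaussian smoothing makes $\hat v_0=\hat v\ast\gamma$ strictly convex'' is false as stated (take $\hat v(x)=\vert x_1\vert$ on $\Rd$, $d\geqslant 2$). What you actually need, and what suffices, is that $\nabla\hat v_0$ is injective on a set of full $\hat\alpha$-measure, which does hold whenever the pair $(\mu,\nu)$ is nondegenerate in the sense forced by irreducibility; the identity \eqref{eq_du_op_b_m_xi_fin} is only asserted $\mu$-a.e., so this is enough.
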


We set $\hat{u} \coloneqq \hat{v} \ast \gamma$, so that $\nabla \hat{u}(\hat{\alpha}) = \mu$. Recalling \eqref{eq_def_id_bm}, we summarize the relationships between the optimizers in the following diagram:
\[
\begin{tikzcd}[row sep=huge, column sep = huge]
\hat{\alpha} \ast \gamma \arrow[shift right, swap]{r}{\nabla \hat{v}} & \nu \arrow[shift right, swap]{l}{\nabla \hat{\psi}} \\
\hat{\alpha} \arrow[shift right, swap]{r}{\nabla \hat{u}}  \arrow{u}{\ast} & \mu 
\arrow[swap]{u}{\hat{\pi}_{\cdot}} \arrow[shift right, swap]{l}{\nabla \hat{\varphi}} 
\end{tikzcd}
\]

\smallskip

Finally, we prove the equivalence between the identities \eqref{eq_def_id_bm} and the existence of a Bass martingale from $\mu$ to $\nu$.

\begin{lemma} \label{lem_eq_co_bm_13} Let $\mu, \nu \in \PP_{2}(\Rd)$ with $\mu \lc \nu$. There is a Bass martingale $\hat{M}$ with Bass measure $\hat{\alpha} \in \PP(\Rd)$ from $\mu = \Law(\hat{M}_{0})$ to $\nu = \Law(\hat{M}_{1})$ if and only if there is a convex function $\hat{v} \colon \Rd \rightarrow \R$ satisfying the identities 
\begin{equation} \label{eq_def_id_bm_rep}
(\nabla \hat{v} \ast \gamma)(\hat{\alpha}) = \mu
\qquad \textnormal{ and } \qquad 
\nabla \hat{v}(\hat{\alpha} \ast \gamma) = \nu.
\end{equation}
Moreover, the Bass martingale $\hat{M}$ can be expressed as 
\begin{equation} \label{eq_def_id_bm_sv_rep}
\hat{M}_{t} = \nabla \hat{v}_{t}(B_{t}), \qquad 0 \leqslant t \leqslant 1.
\end{equation}
\begin{proof} Let $\hat{M}$ be a Bass martingale in the sense of Definition \ref{def:BassMarti_intro}. We first prove \eqref{eq_def_id_bm_sv_rep}. Let $A \subseteq \Rd$ be a Borel set. We have to show that
\begin{equation} \label{lem_eq_co_bm_13_i}
\mathbb{E}\big[\nabla \hat{v}(B_{1}) \, \boldsymbol{1}_{\{B_{t} \in A \}}\big] 
= \mathbb{E}\big[(\nabla \hat{v} \ast \gamma^{1-t})(B_{t}) \, \boldsymbol{1}_{\{B_{t} \in A \}}\big].
\end{equation}
Denote by $\varphi_{t}(x,y)$ the Gaussian kernel, for $t \in (0,1]$ and $x,y \in \Rd$. Then the left-hand side of \eqref{lem_eq_co_bm_13_i} can be expressed as
\[
\int \hat{\alpha}(dx_{0}) \int_{A} \varphi_{t}(x_{0},dx_{t}) \int \nabla \hat{v}(x_{1}) \, \varphi_{1-t}(x_{t},dx_{1}),
\]
while the right-hand side is equal to
\[
\int \hat{\alpha}(dx_{0}) \int_{A} (\nabla \hat{v} \ast \gamma^{1-t})(x_{t}) \, \varphi_{t}(x_{0},dx_{t}).
\]
Now we see that \eqref{lem_eq_co_bm_13_i} follows from 
\[
\int \nabla \hat{v}(x_{1}) \, \varphi_{1-t}(x_{t},dx_{1}) 
= \int \nabla \hat{v}(x_{1}) \, \gamma_{x_{t}}^{1-t}(dx_{1}) 
= (\nabla \hat{v} \ast \gamma^{1-t})(x_{t}),
\]
where $\gamma_{x_{t}}^{1-t}$ denotes the $d$-dimensional Gaussian distribution with barycenter $x_{t}$ and covariance matrix $(1-t)I_{d}$. This completes the proof of \eqref{eq_def_id_bm_sv_rep}.

\smallskip

In particular, at times $t=0$ and $t=1$ we obtain from \eqref{eq_def_id_bm_sv_rep} that $\hat{M}_{0} = (\nabla \hat{v} \ast \gamma)(B_{0})$ and $\hat{M}_{1} = \nabla \hat{v}(B_{1})$, respectively. If $\hat{M}$ is a Bass martingale from $\mu = \Law(\hat{M}_{0})$ to $\nu = \Law(\hat{M}_{1})$, this readily gives \eqref{eq_def_id_bm_rep}

\medskip

Conversely, suppose that $\mu,\nu,\hat{\alpha},\hat{v}$ satisfy the identities \eqref{eq_def_id_bm_rep}. Let $(B_{t})_{0 \leqslant t \leqslant 1}$ be Brownian motion on $\Rd$ with $\Law(B_{0}) = \hat{\alpha}$. We then define a process $(\hat{M}_{t})_{0 \leqslant t \leqslant 1}$ by \eqref{def:BassMarti_intro_eq}. In light of the previous argument, $\hat{M}$ is characterized by \eqref{eq_def_id_bm_sv_rep}. Since by assumption the identities \eqref{eq_def_id_bm_rep} are satisfied, we see that $\Law(\hat{M}_{0}) = \mu$ and $\Law(\hat{M}_{1}) = \nu$. Thus $\hat{M}$ is indeed a Bass martingale from $\mu$ to $\nu$.
\end{proof}
\end{lemma}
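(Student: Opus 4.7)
The plan is to prove the ``Moreover'' representation \eqref{eq_def_id_bm_sv_rep} first, and then derive both directions of the equivalence as easy specializations at the endpoints $t=0$ and $t=1$.

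First I would start from the definition \eqref{def:BassMarti_intro_eq} and compute the conditional expectation using the Markov property and independence of increments of Brownian motion. Writing $B_1 = B_t + (B_1 - B_t)$, where $B_1 - B_t$ is independent of $\sigma(B_s \colon s \leqslant t)$ and distributed as $\gamma^{1-t}$, one gets
\[
\E\bigl[\nabla \hat v(B_1) \,\big|\, B_t\bigr]
= \int \nabla \hat v\bigl(B_t + z\bigr)\, \gamma^{1-t}(dz)
= (\nabla \hat v \ast \gamma^{1-t})(B_t).
\]
Since $\hat v_t = \hat v \ast \gamma^{1-t}$ and convolution with a smooth density commutes with the gradient (so $\nabla \hat v_t = \nabla \hat v \ast \gamma^{1-t}$ as functions on $\Rd$), this identifies $\hat M_t = \nabla \hat v_t(B_t)$ and establishes \eqref{eq_def_id_bm_sv_rep}. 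To make this conditional expectation argument precise without appealing to smoothness of $\nabla \hat v$, I would check the defining integral identity on an arbitrary Borel set $A \subseteq \Rd$ by disintegrating against $\hat\alpha \otimes \varphi_t(x_0, \cdot) \otimes \varphi_{1-t}(x_t, \cdot)$ and applying Fubini.

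Next I specialize \eqref{eq_def_id_bm_sv_rep} to the endpoints. At $t=0$: $\hat M_0 = \nabla \hat v_0(B_0) = (\nabla \hat v \ast \gamma)(B_0)$, so $\mathrm{Law}(B_0) = \hat\alpha$ and $\mathrm{Law}(\hat M_0) = \mu$ together give $(\nabla \hat v \ast \gamma)(\hat\alpha) = \mu$. At $t=1$: $\hat M_1 = \nabla \hat v(B_1)$ with $\mathrm{Law}(B_1) = \hat\alpha \ast \gamma$, so $\nabla \hat v(\hat\alpha \ast \gamma) = \nu$. This yields the identities \eqref{eq_def_id_bm_rep}, i.e.\ the ``only if'' direction.

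For the converse, assuming $\hat v$ convex and the identities \eqref{eq_def_id_bm_rep}, I would define $(\hat M_t)_{0 \leqslant t \leqslant 1}$ by the conditional expectation formula \eqref{def:BassMarti_intro_eq} with the Brownian motion $B$ started from $\hat\alpha$. Then $\hat M$ is automatically a martingale with respect to the Brownian filtration, and the computation of the first step still gives the representation $\hat M_t = \nabla \hat v_t(B_t)$. Evaluating at $t=0,1$ and using \eqref{eq_def_id_bm_rep} identifies $\mathrm{Law}(\hat M_0) = \mu$ and $\mathrm{Law}(\hat M_1) = \nu$, so $\hat M$ is a Bass martingale from $\mu$ to $\nu$.

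The only nontrivial point is the first step: justifying the interchange of the conditional expectation with convolution when $\hat v$ is merely convex (hence $\nabla \hat v$ is defined Lebesgue-almost everywhere and is at most of linear growth on bounded sets). The square-integrability of $\nabla \hat v(B_1)$ guaranteed by Definition \ref{def:BassMarti_intro} plus Fubini is enough, but this is the place where one needs to be careful. Once this is handled, both implications and the representation formula fall out in one stroke.
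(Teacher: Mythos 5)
Your proposal is correct and follows essentially the same route as the paper: first establish the representation $\hat M_t = \nabla \hat v_t(B_t)$ by computing the conditional expectation $\E[\nabla\hat v(B_1)\mid B_t]$ via the Gaussian transition kernels (made rigorous by checking the integral identity on Borel sets $A$ through disintegration and Fubini), then read off the marginal identities at $t=0,1$ for the forward direction and run the same representation backwards for the converse. The paper's proof is precisely this, so there is nothing to add.
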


\section{A variational characterization of Bass measures} \label{sec2avcob}

\noindent Throughout this section we fix $\mu, \nu \in \PP_{2}(\Rd)$ with $\mu \lc \nu$ and provide the proof of Theorem \ref{prop_alpha_vari_mc}. This is done in several steps.

\begin{lemma} \label{prop_alpha_vari_mc_step1} We have the weak duality 
\begin{equation} \label{eq:variational_alpha_mc_step1}
P(\mu,\nu) 
\leqslant \inf_{\alpha \in \P_{2}(\R^{d})} \mathcal{V}(\alpha).
\end{equation}
\begin{proof} Let $\alpha \in \P_{2}(\R^{d})$ be arbitrary. By Brenier's theorem \cite[Theorem 2.12]{Vi03} there is a convex function $v$ such that $\nabla v(\alpha \ast \gamma) = \nu$. Hence from the Kantorovich duality \cite[Theorem 5.10]{Vi09} it follows that
\[
\MCov(\alpha \ast \gamma,\nu) 
= \int v \, d(\alpha \ast \gamma) + \int v^{\ast} \, d\nu 
= \int ( v \ast \gamma) \, d\alpha  + \int v^{\ast} \, d\nu.
\]
Since $v \ast \gamma$ is convex, applying once more the Kantorovich duality yields
\begin{align*}
\MCov(\alpha \ast \gamma,\nu) 
&= \int v^{\ast} \, d\nu - \int ( v \ast \gamma)^{\ast} \, d\mu 
+ \int ( v \ast \gamma) \, d\alpha + \int ( v \ast \gamma)^{\ast} \, d\mu  \\
&\geqslant \int v^{\ast} \, d\nu - \int ( v \ast \gamma)^{\ast} \, d\mu + \MCov(\alpha,\mu).
\end{align*}
Finally, from Theorem \ref{theorem_new_duality} we deduce that
\begin{align*}
\MCov(\alpha \ast \gamma,\nu) 
&\geqslant \inf_{\textnormal{$\psi$ convex}} \Big( \int \psi \, d\nu - \int (\psi^{\ast} \ast \gamma)^{\ast} \, d \mu \Big) + \MCov(\alpha,\mu) \\
&= P(\mu,\nu) + \MCov(\alpha,\mu),
\end{align*}
which gives the inequality \eqref{eq:variational_alpha_mc_step1}. We remark that it is immaterial whether in \eqref{WeakDual} we optimize over convex functions $\psi$ which are elements of $L^{1}(\nu)$ or which are just $\mu$-a.s.\ finite, see \cite[Section 4]{BBST23}. 
\end{proof}
\end{lemma}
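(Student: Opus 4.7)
The plan is to fix an arbitrary $\alpha \in \P_2(\R^d)$ and manufacture, from it, a convex admissible function for the dual problem \eqref{WeakDual}. Since Theorem~\ref{theorem_new_duality} identifies $P(\mu,\nu)$ with the infimum $D(\mu,\nu)$ of the dual problem, it will then suffice to show that $\mathcal{V}(\alpha)$ dominates the dual functional evaluated at that admissible candidate, and then take the infimum over $\alpha$.

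To build the candidate, I would invoke Brenier's theorem for the transport from $\alpha \ast \gamma$ to $\nu$: this yields a convex function $v$ with $\nabla v (\alpha \ast \gamma) = \nu$. Brenier optimality together with Kantorovich duality gives the equality
\[
\MCov(\alpha \ast \gamma, \nu) = \int v \, d(\alpha \ast \gamma) + \int v^{\ast} \, d\nu = \int (v \ast \gamma) \, d\alpha + \int v^{\ast} \, d\nu,
\]
where the second equality shifts the Gaussian convolution from the measure to the integrand via Fubini. Since the convolution $v \ast \gamma$ is again convex (Gaussian convolution preserves convexity), Kantorovich duality in its inequality form furnishes
\[
\MCov(\alpha, \mu) \leqslant \int (v \ast \gamma) \, d\alpha + \int (v \ast \gamma)^{\ast} \, d\mu.
\]

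Subtracting these two displays, the common term $\int (v \ast \gamma) \, d\alpha$ cancels and we obtain
\[
\mathcal{V}(\alpha) \geqslant \int v^{\ast} \, d\nu - \int (v \ast \gamma)^{\ast} \, d\mu.
\]
Setting $\psi := v^{\ast}$, which is convex, and using $v = \psi^{\ast}$ (since $v$ is convex and lower semicontinuous), the right-hand side is precisely $\int \psi \, d\nu - \int (\psi^{\ast} \ast \gamma)^{\ast} \, d\mu$. This is the dual functional in \eqref{WeakDual} evaluated at the admissible $\psi$, so it is bounded below by $D(\mu,\nu)$, which equals $P(\mu,\nu)$ by Theorem~\ref{theorem_new_duality}. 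Taking the infimum over $\alpha$ finishes the argument.

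The main obstacle I anticipate is integrability: the Brenier potential $v$ need not lie in $L^1(\nu)$ and its Gaussian mollification need not lie in $L^1(\mu)$, so the two Kantorovich duality steps above must be justified in a suitably relaxed sense. The cleanest way around this is to appeal to the relaxed formulation of \eqref{WeakDual} established in the companion paper, where the dual candidate is only required to be $\mu$-a.s.\ finite rather than genuinely integrable; this sidesteps any need for a priori $L^1$ control on $v$ or $v \ast \gamma$.
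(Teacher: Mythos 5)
Your proof is correct and follows essentially the same route as the paper: Brenier's theorem to get the potential $v$ for $\alpha\ast\gamma \to \nu$, two applications of Kantorovich duality (equality for the optimal pair, inequality for the sub-optimal candidate $v\ast\gamma$), Fubini to move the convolution, and finally Theorem~\ref{theorem_new_duality} to identify the dual infimum with $P(\mu,\nu)$. The only cosmetic difference is that you state the second Kantorovich step directly as an inequality and subtract, whereas the paper adds and subtracts $\int(v\ast\gamma)^\ast\,d\mu$; your closing remark on integrability also matches the paper's.
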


\begin{lemma} \label{prop_alpha_vari_mc_step2} Suppose that there exists a Bass martingale from $\mu$ to $\nu$ with Bass measure $\hat{\alpha} \in \P_{2}(\R^{d})$. Then the right-hand side of \eqref{eq:variational_alpha_mc_step1} is attained by $\hat{\alpha}$ and is equal to
\begin{equation} \label{prop_alpha_vari_mc_step2_eq}
\mathcal{V}(\hat{\alpha})
= \int \MCov(\hat{\pi}_{x},\gamma) \, \mu(dx),
\end{equation}
where $\hat{\pi} \in \MT(\mu,\nu)$ is the optimizer of \eqref{eq_primal}.
\begin{proof} By assumption there exists a Bass martingale from $\mu$ to $\nu$, with Bass measure $\hat{\alpha} \in \P_{2}(\R^{d})$ and associated convex function $\hat{v}$ satisfying (recall Lemma \ref{lem_eq_co_bm_13}) the identities \eqref{eq_def_id_bm_rep}. According to Lemma \ref{theo_du_op_b_m}, we have that $\hat{\alpha} = \nabla \hat{\varphi}(\mu)$ and 
\[
\hat{\pi}_{x} 
= \nabla \hat{v}(\gamma_{\nabla \hat{\varphi}(x)}),
\]
for $\mu$-a.e.\ $x \in \Rd$. Applying Brenier's theorem, we deduce that
\begin{align*}
&\int \MCov(\hat{\pi}_{x},\gamma) \, \mu(dx) 
= \int \int \big\langle \nabla \hat{v}\big(\nabla\hat{\varphi}(x) + z \big), z \big\rangle \, \gamma(dz) \, \mu(dx) \\
& \qquad = \int \int \Big( \big\langle \nabla \hat{v}\big(\nabla\hat{\varphi}(x) + z \big), \nabla\hat{\varphi}(x) + z \big\rangle - \big\langle \nabla \hat{v}\big(\nabla\hat{\varphi}(x) + z \big), \nabla\hat{\varphi}(x) \big\rangle \Big) \, \gamma(dz) \, \mu(dx)   \\
& \qquad = \int \int \big( \langle \nabla \hat{v}(a + z ), a + z \rangle - \langle \nabla \hat{v}(a + z ), a \rangle \big) \, \gamma(dz) \, \hat{\alpha}(da) \\
&\qquad = \int \langle \nabla \hat{v}, \operatorname{Id} \rangle \, d(\hat{\alpha} \ast \gamma) - \int \langle  (\nabla\hat{v} \ast \gamma), \operatorname{Id} \rangle \, d\hat{\alpha} \\
&\qquad = \MCov(\hat{\alpha} \ast \gamma,\nu) - \MCov(\hat{\alpha},\mu) 
= \mathcal{V}(\hat{\alpha}),
\end{align*}
which shows \eqref{prop_alpha_vari_mc_step2_eq}. Together with the weak duality \eqref{eq:variational_alpha_mc_step1} of Lemma \ref{prop_alpha_vari_mc_step1} above, and recalling from Subsection \ref{subs_smot} that the right-hand side of \eqref{prop_alpha_vari_mc_step2_eq} is equal to $\tilde{P}(\mu,\nu) = P(\mu,\nu)$, we conclude the assertion of Lemma \ref{prop_alpha_vari_mc_step2}.
\end{proof}
\end{lemma}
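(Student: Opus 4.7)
The plan is to compute $\int \MCov(\hat{\pi}_{x},\gamma)\,\mu(dx)$ explicitly using the structural description of $\hat{\pi}$ and $\hat{\alpha}$ afforded by Lemma \ref{theo_du_op_b_m}, and then to recognize the outcome as $\mathcal{V}(\hat{\alpha})$. Once \eqref{prop_alpha_vari_mc_step2_eq} is in hand, the attainment claim follows immediately: by Subsection \ref{subs_smot} we have $\int\MCov(\hat{\pi}_{x},\gamma)\,\mu(dx) = \tilde{P}(\mu,\nu) = P(\mu,\nu)$, so combining with the weak duality of Lemma \ref{prop_alpha_vari_mc_step1} forces $\mathcal{V}(\hat{\alpha}) = P(\mu,\nu) = \inf_{\alpha}\mathcal{V}(\alpha)$.

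First I would invoke Lemma \ref{theo_du_op_b_m} to write $\hat{\alpha}=\nabla\hat{\varphi}(\mu)$ and $\hat{\pi}_{x} = \nabla\hat{v}(\gamma_{\nabla\hat{\varphi}(x)})$ for $\mu$-a.e.\ $x$. For fixed $x$, the map $z\mapsto\nabla\hat{v}(\nabla\hat{\varphi}(x)+z)$ is the gradient in $z$ of the convex function $z\mapsto\hat{v}(\nabla\hat{\varphi}(x)+z)$, so by Brenier's theorem it is the optimal transport from $\gamma$ to $\hat{\pi}_{x}$. Hence
\[
\MCov(\hat{\pi}_{x},\gamma) = \int \big\langle \nabla\hat{v}\big(\nabla\hat{\varphi}(x)+z\big), z\big\rangle\,\gamma(dz).
\]
Integrating against $\mu$ and changing variables via $a=\nabla\hat{\varphi}(x)$, using $\hat{\alpha}=\nabla\hat{\varphi}(\mu)$, turns the left-hand side into $\int\!\!\int\langle\nabla\hat{v}(a+z),z\rangle\,\gamma(dz)\,\hat{\alpha}(da)$.

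The decisive algebraic step is then to split $z=(a+z)-a$ inside the inner product, yielding two pieces. The first, $\int\!\!\int\langle\nabla\hat{v}(a+z),a+z\rangle\,\gamma(dz)\,\hat{\alpha}(da) = \int\langle\nabla\hat{v},\mathrm{Id}\rangle\,d(\hat{\alpha}\ast\gamma)$, is the expected inner product of the Brenier coupling between $\hat{\alpha}\ast\gamma$ and its image $\nabla\hat{v}(\hat{\alpha}\ast\gamma)=\nu$ (second identity of \eqref{eq_def_id_bm_rep}), hence equals $\MCov(\hat{\alpha}\ast\gamma,\nu)$. In the second, Fubini collapses the inner $z$-integral into a convolution, giving $\int\langle(\nabla\hat{v}\ast\gamma)(a),a\rangle\,\hat{\alpha}(da)$; since $\nabla\hat{v}\ast\gamma = \nabla(\hat{v}\ast\gamma)$ is the gradient of a convex function and pushes $\hat{\alpha}$ forward to $\mu$ (first identity of \eqref{eq_def_id_bm_rep}), this integral is the Brenier coupling between $\hat{\alpha}$ and $\mu$ and therefore equals $\MCov(\hat{\alpha},\mu)$. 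Subtracting the two terms delivers $\mathcal{V}(\hat{\alpha})$, completing \eqref{prop_alpha_vari_mc_step2_eq}.

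The main obstacle I anticipate is the clean identification of each resulting integral as a maximal covariance: both rely on Brenier's theorem applied to the two distinct transport legs arising from a Bass martingale, namely the ``downstream'' map $\nabla\hat{v}\colon\hat{\alpha}\ast\gamma\to\nu$ and the ``upstream'' map $\nabla\hat{v}\ast\gamma\colon\hat{\alpha}\to\mu$. The hinge is that convexity of $\hat{v}$ propagates, via smoothing, to convexity of $\hat{v}\ast\gamma$, so its gradient is automatically a Brenier map; this, together with the Bass identities \eqref{eq_def_id_bm_rep}, is exactly what turns the two algebraic pieces into $\MCov(\hat{\alpha}\ast\gamma,\nu)$ and $\MCov(\hat{\alpha},\mu)$ respectively, at which point the proof is essentially bookkeeping.
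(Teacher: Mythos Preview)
Your proposal is correct and follows essentially the same route as the paper's proof: invoke Lemma \ref{theo_du_op_b_m} for the structure of $\hat{\pi}_{x}$, compute $\MCov(\hat{\pi}_{x},\gamma)$ via Brenier, perform the split $z=(a+z)-a$, and identify the two resulting integrals as $\MCov(\hat{\alpha}\ast\gamma,\nu)$ and $\MCov(\hat{\alpha},\mu)$ using the Bass identities \eqref{eq_def_id_bm_rep}. The only cosmetic difference is that you change variables to $\hat{\alpha}$ before the algebraic split rather than after, and you spell out explicitly why $\nabla\hat{v}\ast\gamma$ is a Brenier map---both harmless reorderings of the same argument.
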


\begin{lemma} \label{prop_alpha_vari_mc_step3} We have the duality result
\begin{equation} \label{eq:variational_alpha_mc_step3}
P(\mu,\nu)
= \inf_{\alpha \in \P_{2}(\R^{d})} \mathcal{V}(\alpha).
\end{equation}
\begin{proof} For $\varepsilon > 0$ we define $\mu^{\varepsilon} \coloneqq \mu\ast \gamma^{\varepsilon}$ and $\nu^{\varepsilon} \coloneqq \nu \ast \gamma^{2\varepsilon}$. Then $\mu^{\varepsilon} \lc \nu^{\varepsilon}$ and the pair $(\mu^{\varepsilon} ,\nu^{\varepsilon})$ is irreducible. Hence by Theorem \ref{MainTheorem} there is a Bass martingale from $\mu^{\varepsilon}$ to $\nu^{\varepsilon}$, so that by Lemma \ref{prop_alpha_vari_mc_step2} we have
\begin{equation} \label{eq:variational_alpha_mc_e}
\sup_{\pi \in \MT(\mu^{\varepsilon},\nu^{\varepsilon})} \int \MCov(\pi_{x},\gamma) \, \mu^{\varepsilon}(dx) 
= \inf_{\alpha \in \P_{2}(\R^{d})} \Big( \MCov(\alpha \ast \gamma,\nu^{\varepsilon}) - \MCov(\alpha,\mu^{\varepsilon}) \Big).  
\end{equation}
By weak optimal transport arguments (see \cite[Theorem 2.3]{BeJoMaPa21b}) we know
\[
\limsup_{\varepsilon \rightarrow 0} \sup_{\pi \in \MT(\mu^{\varepsilon},\nu^{\varepsilon})} 
\int \MCov(\pi_{x},\gamma) \, \mu^{\varepsilon}(dx)
\leqslant \sup_{\pi \in \MT(\mu,\nu)} \int \MCov(\pi_{x},\gamma) \, \mu(dx).
\]
Therefore, if we can show that the right-hand side of \eqref{eq:variational_alpha_mc_e} converges to the right-hand side of \eqref{eq:variational_alpha_mc_step3}, we will obtain the inequality
\[
P(\mu,\nu) 
\geqslant \inf_{\alpha \in \P_{2}(\R^{d})} \mathcal{V}(\alpha)
= \inf_{\alpha \in \P_{2}(\R^{d})} \Big( \MCov(\alpha \ast \gamma,\nu) - \MCov(\alpha,\mu) \Big),
\]
which, together with the weak duality of Lemma \ref{prop_alpha_vari_mc_step1}, establishes \eqref{eq:variational_alpha_mc_step3}. But this follows easily from 
\[
\vert \MCov(\alpha,\mu^{\varepsilon}) -\MCov(\alpha,\mu) \vert
\leqslant c_{1} \varepsilon + \tfrac{1}{2} \vert \W_{2}^{2}(\alpha,\mu^{\varepsilon}) - \W_{2}^{2}(\alpha,\mu) \vert 
\leqslant c_{2}(\varepsilon+\varepsilon^{2})
\]
and a similar estimate for $\vert\MCov(\alpha \ast \gamma,\nu^{\varepsilon}) -\MCov(\alpha\ast\gamma,\nu)\vert$.
\end{proof}
\end{lemma}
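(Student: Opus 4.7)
Since Lemma \ref{prop_alpha_vari_mc_step1} already yields the weak duality inequality $P(\mu,\nu) \leq \inf_{\alpha} \mathcal{V}(\alpha)$, the task is to establish the reverse inequality without the irreducibility hypothesis. The plan is to \emph{reduce to the irreducible case} by a Gaussian mollification, apply the full duality result available there, and then pass to the limit. Concretely, I would set $\mu^{\varepsilon} \coloneqq \mu \ast \gamma^{\varepsilon}$ and $\nu^{\varepsilon} \coloneqq \nu \ast \gamma^{2\varepsilon}$, and denote by $\mathcal{V}^{\varepsilon}$ the Bass functional associated with these marginals.

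First I would verify the two structural properties of the mollified pair. For the convex order $\mu^{\varepsilon} \lc \nu^{\varepsilon}$, pick any martingale coupling $(X_0, X_1)$ of $(\mu,\nu)$ (which exists since $\mu \lc \nu$) and an independent Brownian motion $(W_t)$. Then $(X_0 + W_\varepsilon, X_1 + W_{2\varepsilon})$ is a martingale coupling of $(\mu^\varepsilon, \nu^\varepsilon)$. Irreducibility of $(\mu^\varepsilon, \nu^\varepsilon)$ follows from the fact that both marginals have strictly positive smooth densities, a fact already used in the companion papers of the authors. By Theorem \ref{MainTheorem} there is a Bass martingale from $\mu^{\varepsilon}$ to $\nu^{\varepsilon}$, and Lemma \ref{prop_alpha_vari_mc_step2} (applied to the irreducible pair) yields the identity
\[
P(\mu^{\varepsilon},\nu^{\varepsilon}) = \inf_{\alpha \in \P_{2}(\R^{d})} \mathcal{V}^{\varepsilon}(\alpha).
\]

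The heart of the proof lies in passing $\varepsilon \downarrow 0$. On the left-hand side I would invoke the weak optimal transport upper-semicontinuity result from \cite{BeJoMaPa21b} (Theorem 2.3) to obtain
\[
\limsup_{\varepsilon \to 0} P(\mu^{\varepsilon},\nu^{\varepsilon}) \leq P(\mu,\nu).
\]
On the right-hand side I would exploit the identity $\MCov(p_1, p_2) = \tfrac{1}{2}\bigl(\int |x|^2 dp_1 + \int|x|^2 dp_2 - \W_2^2(p_1,p_2)\bigr)$ together with the elementary bound $\W_2(\mu^{\varepsilon},\mu) \leq \sqrt{d\varepsilon}$ (and likewise for $\nu^\varepsilon$) to obtain the pointwise estimate
\[
|\mathcal{V}^{\varepsilon}(\alpha) - \mathcal{V}(\alpha)| \leq C_\alpha \, \varepsilon + C_\alpha' \sqrt{\varepsilon},
\]
valid for each fixed $\alpha \in \P_2(\R^d)$. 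This gives $\limsup_{\varepsilon} \inf_\alpha \mathcal{V}^\varepsilon(\alpha) \leq \inf_\alpha \mathcal{V}(\alpha)$ by testing against near-minimizers. Combining, one would get
\[
\inf_\alpha \mathcal{V}(\alpha) \;\geq\; \limsup_{\varepsilon \to 0} \inf_\alpha \mathcal{V}^\varepsilon(\alpha) \;=\; \limsup_{\varepsilon \to 0} P(\mu^\varepsilon, \nu^\varepsilon) \;\ldots
\]
and the other ordering to sandwich things into an equality.

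The main obstacle is precisely in this passage to the limit: the pointwise estimate on $\mathcal{V}^\varepsilon - \mathcal{V}$ is not uniform in $\alpha$ (the constant grows with the second moment of $\alpha$), so one must also rule out escape-of-mass in any sequence of approximate minimizers. The cleanest route is to observe that $\mathcal{V}$ is bounded below by $P(\mu,\nu)$ (weak duality) while, via the decomposition above, $\W_2^2(\alpha,\mu) - \W_2^2(\alpha\ast\gamma,\nu)$ forces $\mathcal{V}(\alpha) \to +\infty$ as $\W_2(\alpha,\mu) \to \infty$; this provides the coercivity needed to confine near-minimizers of $\mathcal{V}^\varepsilon$ to a $\W_2$-bounded (hence tight) subset of $\P_2(\R^d)$ on which the convergence $\mathcal{V}^\varepsilon \to \mathcal{V}$ is uniform. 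Once this is in hand, the chain of inequalities closes and yields $P(\mu,\nu) \geq \inf_\alpha \mathcal{V}(\alpha)$, which together with Lemma \ref{prop_alpha_vari_mc_step1} establishes \eqref{eq:variational_alpha_mc_step3}.
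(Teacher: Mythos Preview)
Your overall strategy---Gaussian mollification to force irreducibility, then pass to the limit---is exactly the paper's approach, and your verification of $\mu^{\varepsilon}\lc\nu^{\varepsilon}$ and irreducibility is fine. The difficulty you flag (uniformity in $\alpha$) is also the right one. However, two concrete points need attention.

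\textbf{Direction of the limit inequality.} What closes the argument is
\[
P(\mu,\nu)\;\geqslant\;\limsup_{\varepsilon\to 0}\inf_{\alpha}\mathcal{V}^{\varepsilon}(\alpha)\;\geqslant\;\inf_{\alpha}\mathcal{V}(\alpha),
\]
so the inequality you need is $\inf_{\alpha}\mathcal{V}(\alpha)\leqslant\liminf_{\varepsilon}\inf_{\alpha}\mathcal{V}^{\varepsilon}(\alpha)$. Your ``testing against near-minimizers of $\mathcal{V}$'' yields the opposite inequality $\limsup_{\varepsilon}\inf_{\alpha}\mathcal{V}^{\varepsilon}\leqslant\inf_{\alpha}\mathcal{V}$, which combined with $\limsup_{\varepsilon}P(\mu^{\varepsilon},\nu^{\varepsilon})\leqslant P(\mu,\nu)$ does not sandwich anything. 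The hard direction requires controlling $\mathcal{V}(\alpha)-\mathcal{V}^{\varepsilon}(\alpha)$ from above at (or near) the minimizer $\hat{\alpha}^{\varepsilon}$ of $\mathcal{V}^{\varepsilon}$, for which pointwise convergence is not enough.

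\textbf{The coercivity claim fails.} Your proposed remedy---that $\mathcal{V}(\alpha)\to+\infty$ as $\W_{2}(\alpha,\mu)\to\infty$---is false. Since $\mu\lc\nu$ forces $\int x\,d\mu=\int y\,d\nu$, the Bass functional is \emph{translation invariant}: $\mathcal{V}(\alpha+v)=\mathcal{V}(\alpha)$ for every $v\in\R^{d}$. So $\mathcal{V}$ cannot be coercive in $\W_{2}$, and one cannot confine near-minimizers this way without further work (at minimum one must quotient by translations, and even then coercivity is not obvious).

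The paper sidesteps your compactness route entirely: it asserts a bound of the form
\[
\big|\MCov(\alpha,\mu^{\varepsilon})-\MCov(\alpha,\mu)\big|\leqslant c_{1}\varepsilon+\tfrac{1}{2}\big|\W_{2}^{2}(\alpha,\mu^{\varepsilon})-\W_{2}^{2}(\alpha,\mu)\big|\leqslant c_{2}(\varepsilon+\varepsilon^{2})
\]
(and similarly for the $\nu$-term) with constants independent of $\alpha$, which---if granted---makes $\inf_{\alpha}\mathcal{V}^{\varepsilon}\to\inf_{\alpha}\mathcal{V}$ immediate. In short: same architecture, but the paper argues via a uniform estimate on $\mathcal{V}^{\varepsilon}-\mathcal{V}$ rather than via coercivity of $\mathcal{V}$; your coercivity substitute does not hold as stated.
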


\begin{lemma} \label{prop_alpha_vari_mc_step4} Suppose that the right-hand side of \eqref{eq:variational_alpha_mc_step3} is attained by $\hat{\alpha} \in \P_{2}(\R^{d})$. Then there exists a Bass martingale from $\mu$ to $\nu$ with Bass measure $\hat{\alpha}$.
\begin{proof} By Brenier's theorem there is a convex function $\hat{v}$ such that $\nabla \hat{v}(\hat{\alpha} \ast \gamma) = \nu$. According to Lemma \ref{lem_eq_co_bm_13}, for the existence of a Bass martingale from $\mu$ to $\nu$, it remains to show the first equality in \eqref{eq_def_id_bm_rep}, i.e.,
\begin{equation} \label{prop_alpha_vari_mc_step4_i} 
(\nabla \hat{v} \ast \gamma)(\hat{\alpha}) = \mu.
\end{equation}
Let $\hat{Z}$ and $X$ be random variables with laws $\hat{\alpha}$ and $\mu$, respectively, such that
\begin{equation} \label{prop_alpha_vari_mc_step4_eq_aa} 
\MCov(\hat{\alpha},\mu) = \E\big[ \langle \hat{Z},X \rangle \big].
\end{equation}
Denote by $\hat{q}(dz,dx)$ the law of the coupling $(\hat{Z},X)$. Let $\boldsymbol{w} \colon \Rd \times \Rd \rightarrow \Rd$ be a smooth function with compact support and define probability measures $(\alpha_{u})_{u \in \R} \subseteq \P_{2}(\Rd)$ by
\begin{equation} \label{prop_alpha_vari_mc_step4_eq_ab} 
\int f \, d\alpha_{u} \coloneqq \int \int f\big(z + u\boldsymbol{w}(z,x)\big) \, q(dz,dx), \qquad f \in C_{b}(\Rd).
\end{equation}
We claim that
\begin{equation} \label{prop_alpha_vari_mc_step4_cl_1}
\liminf_{u \rightarrow 0} \tfrac{1}{u} \Big( \MCov(\alpha_{u},\mu) - \MCov(\hat{\alpha},\mu) \Big) 
\geqslant \E\big[ \big\langle \boldsymbol{w}(\hat{Z},X),X \big\rangle \big]
\end{equation}
and
\begin{equation} \label{prop_alpha_vari_mc_step4_cl_2}
\lim_{u \rightarrow 0} \tfrac{1}{u} \Big( \MCov(\alpha_{u} \ast \gamma,\nu) - \MCov(\hat{\alpha} \ast \gamma,\nu) \Big) 
= \E\big[ \big\langle \boldsymbol{w}(\hat{Z},X), (\nabla \hat{v} \ast \gamma)(\hat{Z}) \big\rangle \big].
\end{equation}
Using the optimality of $\hat{\alpha} \in \P_{2}(\Rd)$ for the right-hand side of \eqref{eq:variational_alpha_mc_step3} and admitting the two claims \eqref{prop_alpha_vari_mc_step4_cl_1}, \eqref{prop_alpha_vari_mc_step4_cl_2}, we deduce that 
\begin{align*} 
0 &\leqslant 
\liminf_{u \rightarrow 0} \tfrac{1}{u} \bigg( 
\Big( \MCov(\alpha_{u} \ast \gamma,\nu) - \MCov(\hat{\alpha} \ast \gamma,\nu) \Big)
- \Big( \MCov(\alpha_{u},\mu) - \MCov(\hat{\alpha},\mu) \Big) \bigg) \\
&\leqslant \E\big[ \big\langle \boldsymbol{w}(\hat{Z},X) , (\nabla \hat{v} \ast \gamma)(\hat{Z}) - X \big\rangle \big]. 
\end{align*}
Since $\boldsymbol{w}$ was arbitrary, it follows that the random variable $(\nabla \hat{v} \ast \gamma)(\hat{Z})$ has the same law as $X$, which readily gives \eqref{prop_alpha_vari_mc_step4_i}.

\smallskip

We now turn to the proof of the claim \eqref{prop_alpha_vari_mc_step4_cl_1}. By the definition of $\alpha_{u}$ in \eqref{prop_alpha_vari_mc_step4_eq_ab}, the random variable $Z_{u} \coloneqq \hat{Z} + u \boldsymbol{w}(\hat{Z},X)$ has law $\alpha_{u}$. Consequently,
\begin{equation} \label{prop_alpha_vari_mc_step4_eq_ac} 
\MCov(\alpha_{u},\mu) \geqslant \E\big[ \langle Z_{u},X  \rangle \big].
\end{equation}
Combining \eqref{prop_alpha_vari_mc_step4_eq_aa} and \eqref{prop_alpha_vari_mc_step4_eq_ac} yields \eqref{prop_alpha_vari_mc_step4_cl_1}.

\smallskip

It remains to show the claim \eqref{prop_alpha_vari_mc_step4_cl_2}. By analogy with the proof of \eqref{prop_alpha_vari_mc_step4_cl_1}, we obtain the inequality ``$\geqslant$'' in \eqref{prop_alpha_vari_mc_step4_cl_2}.
For the reverse inequality, we note that by the Kantorovich duality we have
\begin{align*}
\MCov(\alpha_{u} \ast \gamma,\nu) 
&= \inf_{\textnormal{$v$ convex}} \Big( \int v \, d(\alpha_{u} \ast \gamma) - \int v^{\ast} \, d \nu \Big) \\
&\leqslant \int \hat{v} \, d(\alpha_{u} \ast \gamma) - \int \hat{v}^{\ast} \, d \nu \\
&= \int (\hat{v} \ast \gamma) \, d\alpha_{u} - \int \hat{v}^{\ast} \, d \nu
\end{align*}
and
\[
\MCov(\hat{\alpha} \ast \gamma,\nu) 
= \int \hat{v} \, d(\alpha \ast \gamma) + \int \hat{v}^{\ast} \, d\nu 
= \int ( \hat{v} \ast \gamma) \, d\hat{\alpha}  + \int \hat{v}^{\ast} \, d\nu.
\]
Therefore
\begin{align*}
\MCov(\alpha_{u} \ast \gamma,\nu) - \MCov(\hat{\alpha} \ast \gamma,\nu)    
&\leqslant \int (\hat{v} \ast \gamma) \, d\alpha_{u} - \int ( \hat{v} \ast \gamma) \, d\hat{\alpha} \\
&= \E\big[(\hat{v} \ast \gamma)\big(\hat{Z} + u \boldsymbol{w}(\hat{Z},X)\big) - (\hat{v} \ast \gamma)(\hat{Z})\big]
\end{align*}
Using the convexity of the function $\hat{v} \ast \gamma$, we deduce that
\[
\tfrac{1}{u} \Big( \MCov(\alpha_{u} \ast \gamma,\nu) - \MCov(\hat{\alpha} \ast \gamma,\nu) \Big) 
\leqslant \E\big[ \big\langle \boldsymbol{w}(\hat{Z},X), (\nabla \hat{v} \ast \gamma)\big(\hat{Z}+ u\boldsymbol{w}(\hat{Z},X)\big) \big\rangle \big].
\]
Now observe that the expectation on the right-hand side of the above inequality is equal to the expectation of the random variable
\[
Y_{u} \coloneqq \Big\langle \boldsymbol{w}(\hat{Z},X) \, , \nabla \hat{v}(\hat{Z} + \Gamma) \exp\big(u \big\langle \Gamma ,  \boldsymbol{w}(\hat{Z},X) \big\rangle - \tfrac{u^{2}}{2} \vert \boldsymbol{w}(\hat{Z},X) \vert^{2}\big)\Big\rangle,
\]
where $\Gamma$ is a standard Gaussian random vector on $\Rd$, independent of $\hat{Z}$ as well as of $X$. Clearly by continuity
\[
\lim_{u \rightarrow 0} Y_{u} = 
\big\langle \boldsymbol{w}(\hat{Z},X) , \nabla \hat{v}(\hat{Z} + \Gamma) \big\rangle, \qquad \mathbb{P}\textnormal{-a.s.}
\]
As $\boldsymbol{w}$ is smooth with compact support, for $\delta > 0$ we can find constants $c_{1},c_{2}$ such that
\[
\forall u \in [-\delta,\delta] \colon \quad
\vert Y_{u} \vert \leqslant c_{1} \, \vert \nabla \hat{v}(\hat{Z} + \Gamma) \vert \, \mathrm{e}^{c_{2} \vert \Gamma \vert}.
\]
By the Cauchy--Schwarz inequality and since $\nabla \hat{v}(\hat{\alpha} \ast \gamma) = \nu \in \PP_{2}(\Rd)$, we have the bound
\[
\mathbb{E}\big[ \vert \nabla \hat{v}(\hat{Z} + \Gamma) \vert \, \mathrm{e}^{\vert \Gamma \vert} \big] 
\leqslant \sqrt{\int \vert y \vert^{2} \, d\nu(y)} \ \sqrt{\mathbb{E}\big[  \mathrm{e}^{2 \vert \Gamma \vert} \big]} < + \infty.
\]
Therefore we can apply the dominated convergence theorem and conclude that
\[
\limsup_{u \rightarrow 0} \tfrac{1}{u} \Big( \MCov(\alpha_{u} \ast \gamma,\nu) - \MCov(\hat{\alpha} \ast \gamma,\nu) \Big) 
\leqslant \E\big[ \big\langle \boldsymbol{w}(\hat{Z},X), (\nabla \hat{v} \ast \gamma)(\hat{Z}) \big\rangle \big],
\]
which completes the proof of the claim \eqref{prop_alpha_vari_mc_step4_cl_2}.
\end{proof}
\end{lemma}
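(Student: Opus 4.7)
By Lemma \ref{lem_eq_co_bm_13}, producing a Bass martingale from $\mu$ to $\nu$ with Bass measure $\hat\alpha$ reduces to exhibiting a convex function $\hat v \colon \Rd \to \R$ fulfilling both identities in \eqref{eq_def_id_bm_rep}. Since $\hat\alpha \ast \gamma$ has a smooth strictly positive density, Brenier's theorem furnishes at once a convex $\hat v$ with $\nabla \hat v(\hat\alpha \ast \gamma) = \nu$, so the entire argument collapses to verifying the remaining identity $(\nabla \hat v \ast \gamma)(\hat\alpha) = \mu$. The plan is to obtain this as a first-order optimality condition for the Bass functional $\V$ at the minimizer $\hat\alpha$.

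To generate admissible perturbations we fix an optimal coupling $(\hat Z, X)$ for $\MCov(\hat\alpha, \mu)$, so that $\MCov(\hat\alpha,\mu) = \E[\langle \hat Z, X\rangle]$, and for a smooth compactly supported test field $\boldsymbol w \colon \Rd \times \Rd \to \Rd$ and $u \in \R$ we set $\alpha_u \coloneqq \Law(\hat Z + u\,\boldsymbol w(\hat Z, X))$. Two one-sided bounds as $u \downarrow 0$ are needed. For $\MCov(\cdot, \mu)$, the pair $(\hat Z + u\boldsymbol w(\hat Z,X), X)$ is a suboptimal coupling of $(\alpha_u, \mu)$, which delivers at once
\[
\liminf_{u \downarrow 0} \tfrac{1}{u}\bigl(\MCov(\alpha_u,\mu) - \MCov(\hat\alpha,\mu)\bigr) \geqslant \E\bigl[\langle \boldsymbol w(\hat Z, X), X\rangle\bigr].
\]
For $\MCov(\cdot \ast \gamma, \nu)$, we invoke the Kantorovich duality $\MCov(p_{1},p_{2}) = \inf_{v\text{ convex}}(\int v\,dp_{1} + \int v^{\ast}\,dp_{2})$ and plug in the fixed Brenier potential $\hat v$ as a suboptimal candidate for $(\alpha_u \ast \gamma, \nu)$, while noting that $\hat v$ is exactly optimal for $(\hat\alpha \ast \gamma, \nu)$. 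Since $\int \hat v\, d(\alpha_u \ast \gamma) = \E[(\hat v \ast \gamma)(\hat Z + u\boldsymbol w)]$ and $\hat v \ast \gamma$ is smooth and convex, the gradient inequality yields the matching upper bound $\tfrac{1}{u}(\MCov(\alpha_u \ast \gamma, \nu) - \MCov(\hat\alpha \ast \gamma, \nu)) \leqslant \E[\langle (\nabla \hat v \ast \gamma)(\hat Z + u\boldsymbol w), \boldsymbol w(\hat Z, X)\rangle]$.

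Combining both estimates with the optimality inequality $\V(\alpha_u) \geqslant \V(\hat\alpha)$ and letting $u \downarrow 0$ produces
\[
\E\bigl[\langle \boldsymbol w(\hat Z, X),\, (\nabla \hat v \ast \gamma)(\hat Z) - X\rangle\bigr] \geqslant 0.
\]
Replacing $\boldsymbol w$ by $-\boldsymbol w$ gives the reverse inequality, so this expectation actually vanishes for every smooth compactly supported test field on $\Rd \times \Rd$. Since $(\nabla \hat v \ast \gamma)(\hat Z) - X$ is $\sigma(\hat Z, X)$-measurable, a standard density argument forces $(\nabla \hat v \ast \gamma)(\hat Z) = X$ almost surely; projecting onto laws then produces the sought identity $(\nabla \hat v \ast \gamma)(\hat\alpha) = \mu$.

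The main technical obstacle lies in the passage to the limit on the $\nu$-side, specifically in justifying $\lim_{u \downarrow 0}\E[\langle (\nabla \hat v \ast \gamma)(\hat Z + u\boldsymbol w), \boldsymbol w(\hat Z, X)\rangle] = \E[\langle (\nabla \hat v \ast \gamma)(\hat Z), \boldsymbol w(\hat Z, X)\rangle]$. Rewriting the Gaussian convolution as an expectation over an independent standard normal vector $\Gamma$ and translating in the Gaussian produces an exponential weight of the form $\exp(u\langle \Gamma, \boldsymbol w\rangle - \tfrac{u^{2}}{2}|\boldsymbol w|^{2})$; pointwise convergence is then transparent, but we need a uniform-in-$u$ dominating function. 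The compact support of $\boldsymbol w$ reduces this to controlling $|\nabla \hat v(\hat Z + \Gamma)|\, e^{c|\Gamma|}$, and Cauchy--Schwarz combined with $\nabla \hat v(\hat\alpha \ast \gamma) = \nu \in \PP_{2}(\Rd)$ and finiteness of Gaussian exponential moments supplies an $L^{1}$ dominating function, unlocking the dominated convergence theorem.
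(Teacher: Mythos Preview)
Your proof is correct and follows essentially the same route as the paper's: the same perturbation $\alpha_u = \Law(\hat Z + u\,\boldsymbol w(\hat Z,X))$ along an optimal coupling, the same suboptimal-coupling lower bound for $\MCov(\cdot,\mu)$, the same Kantorovich-duality/convexity upper bound for $\MCov(\cdot\ast\gamma,\nu)$, and the same dominated-convergence argument via the Gaussian exponential tilt. Your presentation is in fact slightly sharper in two places: you make the $\pm\boldsymbol w$ symmetry explicit to upgrade the one-sided inequality to an equality, and you draw the stronger conclusion $(\nabla\hat v\ast\gamma)(\hat Z)=X$ a.s.\ (rather than merely equality in law), which is indeed what the vanishing of $\E[\langle \boldsymbol w(\hat Z,X),\,(\nabla\hat v\ast\gamma)(\hat Z)-X\rangle]$ for all test fields $\boldsymbol w$ delivers.
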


\begin{proof}[Proof of Theorem \ref{prop_alpha_vari_mc}] The assertion of the theorem follows from Lemmas \ref{prop_alpha_vari_mc_step2} -- \ref{prop_alpha_vari_mc_step4}.
\end{proof}

The reader has certainly noticed that the proof of Lemma \ref{prop_alpha_vari_mc_step2} was given in an analytic style while the proof of Lemma \ref{prop_alpha_vari_mc_step4} was given in a more probabilistic language. In the remainder of this section we give an alternative probabilistic proof of Lemma \ref{prop_alpha_vari_mc_step2} and sketch how to translate the proof of Lemma \ref{prop_alpha_vari_mc_step4} into a more analytic language. 

\smallskip

The following probabilistic proof of Lemma \ref{prop_alpha_vari_mc_step2} does not require the duality results developed in \cite{BBST23}, but only relies on the definition of Bass martingales.


\begin{proof}[Probabilistic proof of Lemma \ref{prop_alpha_vari_mc_step2}] By assumption there exists a Bass martingale from $\mu$ to $\nu$, with Bass measure $\hat{\alpha} \in \P_{2}(\R^{d})$ and associated convex function $\hat{v}$ satisfying (recall Lemma \ref{lem_eq_co_bm_13}) the identities \eqref{eq_def_id_bm_rep}. Let $\alpha \in \P_{2}(\R^{d})$ be arbitrary. We have to show that
\begin{equation} \label{eq:variational_alpha_mc_step1_alt_pro}
\begin{aligned} 
\mathcal{V}(\hat{\alpha}) = & \, \MCov(\hat{\alpha} \ast \gamma,\nu) - \MCov(\hat{\alpha},\mu) \leqslant \\
\leqslant & \, \MCov(\alpha \ast \gamma,\nu) - \MCov(\alpha,\mu) = \mathcal{V}(\alpha).
\end{aligned}
\end{equation}
Take a random variable $\hat{Z}$ with law $\hat{\alpha}$ and define
\begin{equation} \label{eq_prop_alpha_vari_mc_prop_ii_pre_bb}
X \coloneqq (\nabla \hat{v} \ast \gamma)(\hat{Z}).
\end{equation}
By Brenier's theorem the coupling $(\hat{Z},X)$ is optimal and according to \eqref{eq_def_id_bm_rep} the random variable $X$ has law $\mu$. Now choose a random variable $Z$ with law $\alpha$ such that the coupling $(Z,X)$ is optimal with respect to the maximal covariance (equivalently, with respect to the quadratic Wasserstein distance). Clearly
\begin{equation} \label{eq_prop_alpha_vari_mc_prop_ii_pre}
\MCov(\alpha,\mu) - \MCov(\hat{\alpha},\mu) 
= \E\big[\langle Z -  \hat{Z},X \rangle\big].
\end{equation}
Take a standard Gaussian random vector $\Gamma$ on $\Rd$, independent of $Z$ as well as of $\hat{Z}$. The random variables $\hat{Z} + \Gamma$ and 
\begin{equation} \label{eq_prop_alpha_vari_mc_prop_ii_pre_aa}
Y \coloneqq \nabla \hat{v}(\hat{Z} + \Gamma)    
\end{equation}
have laws $\hat{\alpha} \ast \gamma$ and $\nu$, respectively. As by Brenier's theorem the coupling $(\hat{Z} + \Gamma,Y)$ is optimal, we have
\begin{equation} \label{eq_prop_alpha_vari_mc_prop_iii_pre}
\MCov(\hat{\alpha} \ast \gamma,\nu) = \E\big[\langle \hat{Z} + \Gamma,Y \rangle\big].
\end{equation}
Since the random variable $Z + \Gamma$ has law $\alpha \ast \gamma$ we conclude that $(Z+\Gamma,Y)$ is some coupling between $\alpha \ast \gamma$ and $\nu$, i.e.,
\begin{equation} \label{eq_prop_alpha_vari_mc_prop_iv_pre}
\MCov(\alpha \ast \gamma,\nu) \geqslant \E\big[ \langle Z+\Gamma, Y\rangle \big].
\end{equation}
From \eqref{eq_prop_alpha_vari_mc_prop_ii_pre} -- \eqref{eq_prop_alpha_vari_mc_prop_iv_pre} we obtain the inequality 
\[
\MCov(\alpha \ast \gamma,\nu) - \MCov(\hat{\alpha} \ast \gamma,\nu)
- \MCov(\alpha,\mu) + \MCov(\hat{\alpha},\mu) \geqslant \E\big[\langle Z -  \hat{Z},Y - X \rangle\big].    
\]
Therefore, in order to establish the inequality \eqref{eq:variational_alpha_mc_step1_alt_pro}, it remains to show that
\begin{equation} \label{eq_prop_alpha_vari_mc_prop_v_pre}
\E\big[\langle Z -  \hat{Z},Y - X \rangle\big] = 0.
\end{equation}
For that purpose, we condition $Y - X$ on the random variables $Z$ as well as $\hat{Z}$, so that by \eqref{eq_prop_alpha_vari_mc_prop_ii_pre_bb} and \eqref{eq_prop_alpha_vari_mc_prop_ii_pre_aa} we obtain
\[
\E[ Y - X \, \vert \, Z, \hat{Z} ] = 0,
\]
which implies \eqref{eq_prop_alpha_vari_mc_prop_v_pre}.
\end{proof}

We finally give an alternative heuristic argument for Lemma \ref{prop_alpha_vari_mc_step4}, which is based on differentiating the maximal covariance along a continuity equation.

\begin{proof}[Alternative heuristic proof of Lemma \ref{prop_alpha_vari_mc_step4}] Suppose that the right-hand side of \eqref{eq:variational_alpha_mc_step3} is attained by $\hat{\alpha} \in \P_{2}(\R^{d})$. We want to show that there exists a Bass martingale from $\mu$ to $\nu$ with Bass measure $\hat{\alpha}$. The idea is to perturb $\hat{\alpha}$ along a continuity equation
\[
\partial_{t} \alpha_{t} + \operatorname{div}(\boldsymbol{v}_{t} \alpha_{t}) = 0, \qquad t \in (-h,h),
\]
with $h > 0$, $\alpha_{0} \coloneqq \hat{\alpha}$, and where $\boldsymbol{v}_{t}$ is a velocity field. Observe that
\begin{align*}
\partial_{t} \vert_{t=0} \, \mathcal{V}(\alpha_{t})
&= \partial_{t} \vert_{t=0} \, \Big( \MCov(\alpha_{t} \ast \gamma,\nu) - \MCov(\alpha_{t},\mu) \Big) \\
&= \partial_{t} \vert_{t=0} \int \hat{v} \, d(\alpha_{t} \ast \gamma) 
- \partial_{t} \vert_{t=0} \int \hat{u} \, d\alpha_{t},
\end{align*}
where $\nabla \hat{v}(\hat{\alpha}\ast\gamma)=\nu$ is optimal and likewise $\nabla\hat{u}(\hat{\alpha})=\mu$ is optimal. By the continuity equation we obtain
\[
\partial_{t} \vert_{t=0} \int \hat{u} \, d\alpha_{t} 
= \int \langle \nabla \hat{u} ,  \boldsymbol{v}_{0} \rangle \, d\hat{\alpha}.
\]
With similar computations we have
\[
\partial_{t} \vert_{t=0} \int \hat{v} \, d(\alpha_t\ast\gamma) 
= \int \langle \nabla \hat{v} \ast \gamma , \boldsymbol{v}_{0} \rangle \, d\hat{\alpha} . 
\]
As $\boldsymbol{v}_{0}$ was arbitrary and $\hat{\alpha}$ was optimal, we conclude that
\[
0 = \int \big\langle \nabla \hat{v} \ast \gamma - \nabla \hat{u} , \boldsymbol{v}_{0} \big\rangle \, d\hat{\alpha}, 
\]
so that $\nabla \hat{u}$, the optimal map from $\hat{\alpha}$ to $\mu$, is $\hat{\alpha}$-a.s.\ equal to $\nabla \hat{v} \ast\gamma$, where $\nabla \hat{v}$ is the optimal map from $\hat{\alpha}\ast\gamma$ to $\nu$. Recalling \eqref{eq_def_id_bm_rep} and Lemma \ref{lem_eq_co_bm_13}, this is precisely the structure of the Bass martingale.
\end{proof}

\section{An infinitesimal version of Theorem \ref{prop_alpha_vari_mc}} \label{sec3aivot}

\noindent We provide the proof of Theorem \ref{theo_ms2}, an infinitesimal version of Theorem \ref{prop_alpha_vari_mc}.

\begin{proof}[Proof of Theorem \ref{theo_ms2}] For a partition $\Pi = \{t_{0}, t_{1}, \ldots, t_{n} \}$ of the interval $[0,1]$ with 
\[
0 = t_{0} < t_{1} < \ldots < t_{n} = 1
\]
we denote by $\Sigma^{\Pi}$ the collection of all progressively measurable and $L^{2}$-bounded processes $(\sigma_{t}^{\Pi})_{0 \leqslant t \leqslant 1}$ such that the stochastic integral
\[
M_{t}^{\Pi} \coloneqq M_{0} + \int_{0}^{t} \sigma_{s}^{\Pi} \, dB_{s}, \qquad 0 \leqslant t \leqslant 1
\]
defines an $L^{2}$-bounded martingale with $\Law(M_{t_{k}}^{\Pi}) = \mu_{t_{k}}$, for $k = 0, \ldots, n$. We define
\begin{equation} \label{theo_ms2_eq_1} 
m^{\Pi}([t_{k-1},t_{k}]) \coloneqq \sup_{\sigma^{\Pi} \in \Sigma^{\Pi}} 
\E \Big[ \int_{t_{k-1}}^{t_{k}} \textnormal{tr}(\sigma_{s}^{\Pi}) \, ds \Big].
\end{equation}
By \cite{BaBeHuKa20}, we know that the optimizer of
\[
m^{\Pi}([0,1]) = \sup_{\sigma^{\Pi} \in \Sigma^{\Pi}} 
\E \Big[ \int_{0}^{1} \textnormal{tr}(\sigma_{s}^{\Pi}) \, ds \Big]
\]
is given, on each interval $[t_{k-1},t_{k}]$, by the stretched Brownian motion from $\mu_{t_{k-1}}$ to $\mu_{t_{k}}$. By Theorem \ref{prop_alpha_vari_mc} we have
\begin{equation} \label{theo_ms2_eq_3} 
m^{\Pi}([t_{k-1},t_{k}]) = \inf_{\alpha \in \P_{2}(\R^{d})} \Big( \MCov(\alpha \ast \gamma^{t_{k}-t_{k-1}},\mu_{t_{k}}) - \MCov(\alpha,\mu_{t_{k-1}}) \Big).
\end{equation}
For $t_{k} \in \Pi$ and a refinement $\Pi_{1}$ of $\Pi$ we have
\[
m^{\Pi}([0,t_{k}]) \geqslant m^{\Pi_{1}}([0,t_{k}]),
\]
as the process $(\sigma_{t}^{\Pi_{1}})_{0 \leqslant t \leqslant 1}$ has to satisfy more requirements than the process $(\sigma_{t}^{\Pi})_{0 \leqslant t \leqslant 1}$. We therefore may pass to a limit $m \coloneqq \lim m^{\Pi}$ along the net of finite partitions $\Pi$ of the interval $[0,1]$, which extends to a finite measure on $[0,1]$, still denoted by $m$. Clearly the measure $m$ is absolutely continuous with respect to Lebesgue measure on $[0,1]$ and we denote the corresponding density by $g(t)$, for $0 \leqslant t \leqslant 1$. We claim that, for $0 \leqslant r \leqslant u \leqslant 1$, we have
\begin{equation} \label{theo_ms2_eq_2} 
\E \Big[ \int_{r}^{u} \textnormal{tr}(\sigma_{s}) \, ds \Big]
\leqslant m([r,u]).
\end{equation}
Indeed, otherwise we could find a partition $\Pi$ with $r,u \in \Pi$, such that
\[
\E \Big[ \int_{r}^{u} \textnormal{tr}(\sigma_{s}^{\Pi}) \, ds \Big] > m^{\Pi}([r,u]),
\]
which yields a contradiction to the definition of $m^{\Pi}( \, \cdot \,)$ in \eqref{theo_ms2_eq_1}. Since \eqref{theo_ms2_eq_2} holds for all intervals $[r,u] \subseteq [0,1]$, we deduce that
\begin{equation} \label{theo_ms2_eq_4} 
\mathbb{E}\big[ \textnormal{tr}(\sigma_{t})\big]
\leqslant g(t),     
\end{equation}
for Lebesgue-a.e.\ $0 \leqslant t \leqslant 1$. From \eqref{theo_ms2_eq_3} we conclude, for Lebesgue-a.e.\ $0 \leqslant t \leqslant 1$ and for each $\alpha \in \P_{2}(\R^{d})$, the inequality
\[
g(t) \leqslant \liminf_{h \rightarrow 0} \tfrac{1}{h}
\Big( \MCov(\alpha \ast \gamma^{h},\mu_{t+h}) - \MCov(\alpha,\mu_{t}) \Big).
\]
Together with \eqref{theo_ms2_eq_4}, this finishes the proof of \eqref{eq_m_theo_ms2}.
\end{proof}

Again we provide a more analytic argument for Theorem \ref{theo_ms2}, at least on a formal level.

\begin{proof}[Alternative heuristic proof of Theorem \ref{theo_ms2}] We will use the Kantorovich duality and the Fokker--Planck equations to get a hold of $\frac{d}{dh} \MCov(\alpha \ast \gamma^{h},\mu_{t+h})$. By a change of variables we then get an equivalent expression which, when minimized, gives the left-hand side of \eqref{eq_m_theo_ms2}. We suppose here that $M$ is a strong solution of the stochastic differential equation $dM_{u} = \sigma_{u}(M_{u}) \, dB_{u}$, with $\sigma$ as benevolent as needed, so that in particular $\mu_{u}$ admits a density for each $u$.

\smallskip

We set $\rho_{h} \coloneqq \alpha \ast \gamma^{h}$, $\Sigma \coloneqq \sigma\sigma'$, and notice that for fixed $t$ we have
\begin{align*}
\partial_{h} \rho_{h}(x) &= \tfrac{1}{2} \Delta\rho_{h}(x), \qquad \rho_{0} = \alpha;\\
\partial_{h} \mu_{t+h}(x) &= \tfrac{1}{2} \sum_{i,k} \partial^{2}_{ik} \big(\Sigma_{ik}\mu_{t+h}(x)\big).
\end{align*}
By the Kantorovich duality we have
\begin{align*}
\MCov(\rho_h,\mu_{t+h}) 
&= \inf_{\textnormal{$\phi$ convex}} \int \phi \, d\rho_{h} + \int \phi^{\ast} \, d\mu_{t+h} \\ 
&= \int \phi_{\rho_{h}}^{\mu_{t+h}} \, d\rho_{h} + \int \phi^{\rho_{h}}_{\mu_{t+h}} \, d\mu_{t+h},
\end{align*}
where we denote by $\phi_{p}^{q}(\, \cdot \, )$ the convex function, which is unique up to a constant, such that $\nabla \phi_{p}^{q}(p)=q$. Using this, or more directly \cite[Theorem 23.9]{Vi09}, we have
\begin{align*}
\frac{d}{dh} \, \MCov(\rho_{h},\mu_{t+h}) 
&= \int \phi_{\rho_{h}}^{\mu_{t+h}} \, \partial_{h} \rho_{h} \, d\lambda
+ \int \phi^{\rho_{h}}_{\mu_{t+h}} \, \partial_{h} \mu_{t+h} \, d\lambda \\
&= \int \phi_{\rho_{h}}^{\mu_{t+h}} \, \tfrac{1}{2} \Delta \rho_{h} \, d\lambda
+ \int \phi^{\rho_{h}}_{\mu_{t+h}} \, \tfrac{1}{2} \sum_{i,k} \partial^{2}_{ik}(\Sigma_{ik}\mu_{t+h})\,  d\lambda \\
&= \tfrac{1}{2} \int \sum_{i,k} \partial^{2}_{i,k}(\phi_{\rho_h}^{\mu_{t+h}}) \, I_{ik} \, d\rho_{h}
+ \tfrac{1}{2} \int \sum_{i,k} \partial^{2}_{i,k}(\phi^{\rho_{h}}_{\mu_{t+h}}) \, \Sigma_{ik} \, d\mu_{t+h} \\
&= \tfrac{1}{2} \int \textnormal{tr} \big(D^{2}(\phi_{\rho_{h}}^{\mu_{t+h}})\big) \, \rho_{h} \, d\lambda
+\tfrac{1}{2} \int \textnormal{tr} \big(D^{2}(\phi^{\rho_{h}}_{\mu_{t+h}})\Sigma\big) \, \mu_{t+h} \, d\lambda,
\end{align*}
where we denote by $D$ and $D^{2}$ the Jacobian and Hessian matrix, respectively. During this proof we will use the convention that if $x \mapsto a(x) \in \R^{d}$ is an invertible vector-valued function, then $a^{-1}(x)$ denotes the inverse function, whereas if $x \mapsto A(x) \in \R^{d \times d}$ is a matrix-valued function, then $[A(x)]^{-1}$ denotes the matrix inverse of $A(x)$. Now observe that
\[
D^{2}(\phi_{\rho_{h}}^{\mu_{t+h}})(x)
=D(\nabla \phi_{\rho_{h}}^{\mu_{t+h}})(x)
=D\big((\nabla \phi^{\rho_{h}}_{\mu_{t+h}})^{-1}\big)(x)
=[D\nabla \phi^{\rho_{h}}_{\mu_{t+h}} \circ \nabla \phi_{\rho_{h}}^{\mu_{t+h}}(x)]^{-1},
\]
so that
\begin{align*}
\int \textnormal{tr}\big(D^{2}(\phi_{\rho_h}^{\mu_{t+h}})(x)\big) \, \rho_h \, d\lambda 
&= \int \textnormal{tr}\big([D\nabla \phi^{\rho_{h}}_{\mu_{t+h}} \circ \nabla \phi_{\rho_{h}}^{\mu_{t+h}}(x)]^{-1}\big) \, \rho_{h} \, d\lambda \\
&= \int \textnormal{tr}\big([D^{2} \phi^{\rho_{h}}_{\mu_{t+h}}(y)]^{-1}\big) \, \mu_{t+h} \, d\lambda.
\end{align*}
Altogether we have
\begin{align*}
\frac{d}{dh} \, \MCov(\rho_{h},\mu_{t+h})  
= \tfrac{1}{2} \int \Big(  \textnormal{tr} \big( [D^{2} \phi^{\rho_{h}}_{\mu_{t+h}}]^{-1} \big) + \textnormal{tr}\big(D^{2}(\phi^{\rho_{h}}_{\mu_{t+h}})\Sigma \big)  \Big) \, \mu_{t+h} \, d\lambda.
\end{align*}
Define now the functional on invertible, positive-semidefinite symmetric matrices 
\[
A \mapsto J(A) \coloneqq \textnormal{tr}(A^{-1})+\textnormal{tr}(A\Sigma).
\]
We remark that $J(A) \geqslant 2\textnormal{tr}(\Sigma^{1/2})$, since this is equivalent to the trivial statement 
\[
\vert A^{-1/2}-A^{1/2}\Sigma^{1/2} \vert_{\textnormal{HS}} \geqslant 0.
\]
Hence in fact the minimum of $J(\, \cdot \,)$ is attained at $A=\Sigma^{-1/2}$. We conclude that
\[
\frac{d}{dh} \Big\vert_{h=0} \, \MCov(\rho_{h},\mu_{t+h}) 
\geqslant \int \textnormal{tr}\big(\sigma_{t}(y)\big) \, \mu_{t}(dy) 
= \mathbb{E}\big[ \textnormal{tr}\big(\sigma_{t}(M_{t})\big)\big],
\]
which completes the proof of Theorem \ref{theo_ms2}.
\end{proof}

\section{Displacement convexity of the Bass functional} \label{sec4dcotbf}

\noindent We observe that the Bass functional $\alpha \mapsto \mathcal{V}(\alpha)$ provides a novel example of a convex functional with respect to the almost-Riemannian structure of the quadratic Wasserstein space $\PP_{2}$. As mentioned in \cite[Open Problem 5.17]{Vi03}, there are only few known examples of so-called displacement convex functionals (see \cite[Definition 5.10]{Vi03}, \cite[Definition 9.1.1]{AGS08}, \cite{McCgas}), and it is desirable to find new ones.

\smallskip

We shall state two versions of this result. The first one, Proposition \ref{sec_4_dis_one}, pertains to the case $d=1$, while the second one, Proposition \ref{sec_4_dis_d}, holds for general $d \in \mathbb{N}$. We also note that, contrary to the rest of this paper, we do not assume that $\mu \lc \nu$.

\begin{proposition} \label{sec_4_dis_one}
Suppose $d = 1$. Let $\mu, \nu \in \PP_{2}(\R)$. The Bass functional
\begin{equation} \label{sec_4_dis_one_aa}
\PP_{2}(\R) \ni \alpha \longmapsto \mathcal{V}(\alpha) 
= \MCov(\alpha \ast \gamma, \nu) - \MCov(\alpha,\mu)
\end{equation}
is displacement convex. Moreover, if a geodesic $(\alpha_{u})_{0 \leqslant u \leqslant 1}$ in $\PP_{2}(\R)$ is such that $\alpha_{1}$ is not a translate of $\alpha_{0}$ and if $\nu$ is not a Dirac measure, the function $u \mapsto \mathcal{V}(\alpha_{u})$ is strictly convex.
\begin{proof} We start by noting that the Bass functional $\mathcal{V}(\, \cdot \,)$ of \eqref{sec_4_dis_one_aa} can equivalently be defined in terms of the quadratic Wasserstein distance $\mathcal{W}_{2}(\, \cdot \, , \, \cdot \,)$ of \eqref{def_eq_was} rather than in terms of the maximal covariance $\MCov(\, \cdot \, , \, \cdot \,)$ of \eqref{eq_def_mcov}. Indeed, we have the identity
\[
\mathcal{V}(\alpha) 
= \MCov(\alpha \ast \gamma, \nu) - \MCov(\alpha,\mu) 
= \tfrac{1}{2} \mathcal{W}_{2}^{2}(\alpha,\mu) - \tfrac{1}{2} \mathcal{W}_{2}^{2}(\alpha \ast \gamma, \nu) + \textnormal{const},
\]
where the constant
\[
\textnormal{const} = \tfrac{d}{2} + \tfrac{1}{2} \int \vert y \vert^{2} \, d\nu(y) - \tfrac{1}{2} \int \vert x \vert^{2} \, d\mu(x) 
\]
does not depend on $\alpha$. Therefore showing the (strict) displacement convexity of the Bass functional $\mathcal{V}(\, \cdot \,)$ is equivalent to showing the (strict) displacement convexity of the functional
\begin{equation} \label{eq_func_ualph}
\mathcal{U}(\alpha) \coloneqq \mathcal{W}_{2}^{2}(\alpha,\mu) - \mathcal{W}_{2}^{2}(\alpha \ast \gamma,\nu).
\end{equation}

\medskip

Fix $\mu, \nu \in \PP_{2}(\R)$ and let $(\alpha_{u})_{0 \leqslant u \leqslant 1}$ be a geodesic in the quadratic Wasserstein space $\PP_{2}(\R)$. Using the hypothesis $d = 1$ we can choose mutually comonotone random variables $Z_{0}$, $Z_{1}$ and $X$ with laws $\alpha_{0}$, $\alpha_{1}$ and $\mu$, respectively. As $(\alpha_{u})_{0 \leqslant u \leqslant 1}$ is a geodesic, the random variable $Z_{u} \coloneqq (1-u) Z_{0} + u Z_{1}$ has law $\alpha_{u}$, for $0 \leqslant u \leqslant 1$. Also note that each $Z_{u}$ is comonotone with $X$. Let $u_{0},u \in [0,1]$. As regards the first Wasserstein distance in \eqref{eq_func_ualph}, a straightforward calculation yields
\begin{align}
&\mathcal{W}_{2}^{2}(\alpha_{u},\mu) - \mathcal{W}_{2}^{2}(\alpha_{u_{0}},\mu) = \label{sec_4_dis_one_ai1} \\
& \qquad \qquad = \E[\vert Z_{u}-X\vert^{2}] - \E[\vert Z_{u_{0}}-X\vert^{2}] \label{sec_4_dis_one_ai2} \\
& \qquad \qquad = \E[\vert Z_{u}-Z_{u_{0}}\vert^{2}] - 2 \E [ \langle Z_{u}-Z_{u_{0}} , X - Z_{u_{0}}\rangle ] \label{sec_4_dis_one_ai3} \\
& \qquad \qquad = (u-u_{0})^{2} \E[\vert Z_{1}-Z_{0}\vert^{2}] - 2 (u-u_{0}) \E [ \langle Z_{1}-Z_{0} , X - Z_{u_{0}}\rangle]. \label{sec_4_dis_one_ai4}
\end{align}
Passing to the second Wasserstein distance in \eqref{eq_func_ualph}, we take a standard Gaussian random variable $\Gamma$ on $\R$, independent of $Z_{0}$ as well as of $Z_{1}$. Next we choose a random variable $Y_{u_{0}}$ such that $(Z_{u_{0}}+\Gamma,Y_{u_{0}})$ is an optimal coupling of $(\alpha_{u_{0}} \ast \gamma,\nu)$. As $(Z_{u} \ast \Gamma,Y_{u_{0}})$ is a (typically sub-optimal) coupling of $(\alpha_{u} \ast \gamma, \nu)$, we obtain the inequality
\begin{align}
&\mathcal{W}_{2}^{2}(\alpha_{u} \ast \gamma,\nu) - \mathcal{W}_{2}^{2}(\alpha_{u_{0}} \ast \gamma,\nu) \leqslant \label{sec_4_dis_one_ai5} \\
& \qquad \qquad \leqslant \E[\vert Z_{u} + \Gamma -Y_{u_{0}}\vert^{2}] - \E[\vert Z_{u_{0}} + \Gamma -Y_{u_{0}}\vert^{2}] \label{sec_4_dis_one_ai6} \\
& \qquad \qquad = \E[\vert Z_{u}-Z_{u_{0}}\vert^{2}] - 2 \E [ \langle Z_{u}-Z_{u_{0}} , Y_{u_{0}} - (Z_{u_{0}} + \Gamma) \rangle ] \label{sec_4_dis_one_ai7} \\
& \qquad \qquad = (u-u_{0})^{2} \E[\vert Z_{1}-Z_{0}\vert^{2}] - 2 (u-u_{0}) \E [ \langle Z_{1}-Z_{0} , Y_{u_{0}} - (Z_{u_{0}} + \Gamma) \rangle]. \label{sec_4_dis_one_ai8}
\end{align}
Combining \eqref{sec_4_dis_one_ai1} -- \eqref{sec_4_dis_one_ai8}, we deduce that
\begin{align}
&\mathcal{U}(\alpha_{u}) - \mathcal{U}(\alpha_{u_{0}}) =\\
& \qquad = \Big( \mathcal{W}_{2}^{2}(\alpha_{u},\mu) - \mathcal{W}_{2}^{2}(\alpha_{u} \ast \gamma,\nu) \Big)
- \Big( \mathcal{W}_{2}^{2}(\alpha_{u_{0}},\mu) - \mathcal{W}_{2}^{2}(\alpha_{u_{0}} \ast \gamma,\nu) \Big) \geqslant \label{sec_4_dis_one_ai9} \\
& \qquad  \geqslant
2 (u-u_{0}) \E [ \langle Z_{1}-Z_{0} , Y_{u_{0}} - X - \Gamma \rangle] \label{sec_4_dis_one_ai10} \\
& \qquad  =
2 (u-u_{0}) \E [ \langle Z_{1}-Z_{0} , Y_{u_{0}} - X \rangle], \label{sec_4_dis_one_ai11} 
\end{align}
where the last equation follows from conditioning on $Z_{0},Z_{1}$. The expression in \eqref{sec_4_dis_one_ai11} defines a linear function in $u$, which lies below and touches the function 
\begin{align*}
u \longmapsto & \, \mathcal{U}(\alpha_{u}) - \mathcal{U}(\alpha_{u_{0}}) = \\
& \qquad = \Big( \mathcal{W}_{2}^{2}(\alpha_{u},\mu) - \mathcal{W}_{2}^{2}(\alpha_{u} \ast \gamma,\nu) \Big)
- \Big( \mathcal{W}_{2}^{2}(\alpha_{u_{0}},\mu) - \mathcal{W}_{2}^{2}(\alpha_{u_{0}} \ast \gamma,\nu) \Big)
\end{align*}
at the point $u = u_{0}$. This readily implies the convexity of the function 
\[
u \longmapsto \mathcal{U}(\alpha_{u}) = \mathcal{W}_{2}^{2}(\alpha_{u},\mu) - \mathcal{W}_{2}^{2}(\alpha_{u} \ast \gamma,\nu).
\]

\medskip

It remains to show the strict convexity assertion of Proposition \ref{sec_4_dis_one}. If $\alpha_{1}$ is not a translate of $\alpha_{0}$, then $\alpha_{u}$ is not a translate of $\alpha_{u_{0}}$ either, provided that $u \neq u_{0}$. As $Z_{u_{0}} + \Gamma$ is comonotone with $Y_{u_{0}}$ and $Y_{u_{0}}$ is assumed to be non-constant, we may find $y_{0} \in \R$ and $z_{0} \in \R$ such that $\mathbb{P}[Y_{u_{0}} < y_{0}] \in (0,1)$ and
\[
\{ Z_{u_{0}} + \Gamma < z_{0} \} = \{ Y_{u_{0}} < y_{0} \}.
\]
If $Z_{u} + \Gamma$ were also comonotone with $Y_{u_{0}}$, we could find $z \in \R$ such that
\[
\{ Z_{u_{0}} + \Gamma < z_{0} \} 
= \{ Y_{u_{0}} < y_{0} \}
= \{ Z_{u} + \Gamma < z \},
\]
where we have used that the law of $Z_{u} + \Gamma$ is continuous. Conditioning on $\Gamma = \zeta$ this implies that, for Lebesgue-a.e.\ $\zeta \in \R$,
\[
\{ Z_{u_{0}}  < z_{0} - \zeta \} 
= \{ Z_{u} < z - \zeta \},
\]
so that $Z_{u_{0}}$ and $Z_{u}$ are translates. This gives the desired contradiction, showing that there is a strict inequality in \eqref{sec_4_dis_one_ai5}, \eqref{sec_4_dis_one_ai6} (thus also in \eqref{sec_4_dis_one_ai9}, \eqref{sec_4_dis_one_ai10}), which implies the strict convexity assertion of Proposition \ref{sec_4_dis_one}.
\end{proof}
\end{proposition}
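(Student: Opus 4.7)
The plan is to reduce the Bass functional to a difference of squared Wasserstein distances and then exploit the rigid structure of one-dimensional optimal transport (comonotonicity) to build an affine support minorant along every geodesic. Starting from the identity $\MCov(p_1,p_2) = \tfrac{1}{2}\!\int |x|^2 dp_1 + \tfrac{1}{2}\!\int|y|^2 dp_2 - \tfrac{1}{2}\mathcal{W}_2^2(p_1,p_2)$, the $\alpha$-dependent second-moment contributions on $\alpha$ and $\alpha\ast\gamma$ cancel (the convolution adds only the constant $d$), so
\[
\mathcal{V}(\alpha) = \tfrac{1}{2}\mathcal{W}_2^2(\alpha,\mu) - \tfrac{1}{2}\mathcal{W}_2^2(\alpha\ast\gamma,\nu) + \textnormal{const}.
\]
Hence it suffices to prove the (strict) displacement convexity of $\mathcal{U}(\alpha) := \mathcal{W}_2^2(\alpha,\mu) - \mathcal{W}_2^2(\alpha\ast\gamma,\nu)$.

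Fix a $\mathcal{W}_2$-geodesic $(\alpha_u)_{0\leqslant u\leqslant 1}$. Because $d=1$, I would realize it by mutually comonotone random variables: there exist $Z_0, Z_1, X$ with laws $\alpha_0,\alpha_1,\mu$, pairwise comonotone, such that $Z_u := (1-u)Z_0+uZ_1 \sim \alpha_u$ is comonotone with $X$ for every $u$. Fix a reference point $u_0\in[0,1]$. For the first summand of $\mathcal{U}$ comonotonicity renders $(Z_u,X)$ optimal, and an expansion of $Z_u - Z_{u_0} = (u-u_0)(Z_1-Z_0)$ gives an \emph{exact} expression
\[
\mathcal{W}_2^2(\alpha_u,\mu)-\mathcal{W}_2^2(\alpha_{u_0},\mu) = (u-u_0)^2\E[|Z_1-Z_0|^2] - 2(u-u_0)\E[\langle Z_1-Z_0, X-Z_{u_0}\rangle].
\]
For the second summand I would introduce an independent standard Gaussian $\Gamma$ and pick $Y_{u_0}\sim\nu$ making $(Z_{u_0}+\Gamma, Y_{u_0})$ optimal, so that $(Z_u+\Gamma, Y_{u_0})$ is a (generically suboptimal) coupling of $\alpha_u\ast\gamma$ and $\nu$. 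This yields an \emph{upper} bound of exactly the same algebraic shape, with $Y_{u_0}-(Z_{u_0}+\Gamma)$ in place of $X-Z_{u_0}$. Subtracting the two expressions, the identical $(u-u_0)^2\E[|Z_1-Z_0|^2]$ terms cancel, and after conditioning on $(Z_0,Z_1)$ the $\Gamma$ contribution vanishes, leaving
\[
\mathcal{U}(\alpha_u)-\mathcal{U}(\alpha_{u_0}) \geqslant 2(u-u_0)\E[\langle Z_1-Z_0, Y_{u_0}-X\rangle].
\]
Since this affine-in-$u$ support function vanishes at $u=u_0$ and such a support exists for every $u_0$, the function $u\mapsto \mathcal{U}(\alpha_u)$ is convex.

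For strict convexity the plan is to argue that the coupling $(Z_u+\Gamma, Y_{u_0})$ is \emph{strictly} suboptimal whenever $u\neq u_0$ (under the two hypotheses). In 1D an optimal quadratic coupling is comonotone, so equality would force both $Z_{u_0}+\Gamma$ and $Z_u+\Gamma$ to be comonotone with the non-Dirac $Y_{u_0}$. I would then pick $y_0$ with $\mathbb{P}[Y_{u_0}<y_0]\in(0,1)$, match the associated quantile level sets
\[
\{Z_{u_0}+\Gamma<z_0\} = \{Y_{u_0}<y_0\} = \{Z_u+\Gamma<z\},
\]
and condition on $\Gamma=\zeta$ to deduce $\{Z_{u_0}<z_0-\zeta\}=\{Z_u<z-\zeta\}$ for Lebesgue-a.e.\ $\zeta$, which forces $Z_u$ and $Z_{u_0}$ to be translates and contradicts the assumption. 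The main technical obstacle is precisely this slicing/conditioning step: turning almost sure set equalities into a rigorous affine identification of quantile functions. The Gaussian convolution provides the continuity of the laws of $Z_u+\Gamma$ needed to make this passage clean, and I expect this to be the only delicate point.
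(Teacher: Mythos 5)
Your proposal is correct and follows essentially the same route as the paper: the same reduction of $\mathcal{V}$ to $\mathcal{U}(\alpha)=\mathcal{W}_2^2(\alpha,\mu)-\mathcal{W}_2^2(\alpha\ast\gamma,\nu)$, the same comonotone realization $Z_u=(1-u)Z_0+uZ_1$, the same affine minorant obtained from the suboptimal coupling $(Z_u+\Gamma,Y_{u_0})$ with the $\Gamma$-term removed by conditioning, and the same level-set/translation argument for strict convexity. The conditioning step you flag as delicate is handled in the paper exactly as you sketch, using that $Z_u+\Gamma$ has a continuous law.
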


We now pass to the case of general $d \in \mathbb{N}$. In Proposition \ref{sec_4_dis_d} below we formulate a convexity property of the Bass functional $\mathcal{V}(\, \cdot \,)$ pertaining to the notion of \textit{generalized geodesics} as analyzed in \cite[Definition 9.2.2]{AGS08}. Recall that $(\alpha_{u})_{0 \leqslant u \leqslant 1}$ is a generalized geodesic with base $\mu$, joining $\alpha_{0}$ to $\alpha_{1}$, if there are random variables $Z_{0}$, $Z_{1}$ and $X$ with laws $\alpha_{0}$, $\alpha_{1}$ and $\mu$, respectively, such that $(Z_{0},X)$ and $(Z_{1},X)$ are optimal couplings and such that the random variable $Z_{u} \coloneqq uZ_{1} + (1-u) Z_{0}$ has law $\alpha_{u}$, for $0 \leqslant u \leqslant 1$.

\begin{proposition} \label{sec_4_dis_d} 
Let $\mu, \nu \in \PP_{2}(\R^{d})$. The Bass functional
\[
\PP_{2}(\R^{d}) \ni \alpha \longmapsto \mathcal{V}(\alpha) 
= \MCov(\alpha \ast \gamma, \nu) - \MCov(\alpha,\mu)
\]
is convex along generalized geodesics $(\alpha_{u})_{0 \leqslant u \leqslant 1}$ in $\PP_{2}(\R^{d})$ with base $\mu$.
\end{proposition}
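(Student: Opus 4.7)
The plan is to mirror the strategy of Proposition \ref{sec_4_dis_one}. First I would translate the Bass functional into squared Wasserstein form via
\[
\mathcal{V}(\alpha) = \tfrac{1}{2}\mathcal{W}_{2}^{2}(\alpha,\mu) - \tfrac{1}{2}\mathcal{W}_{2}^{2}(\alpha \ast \gamma,\nu) + \textnormal{const},
\]
where the additive constant is independent of $\alpha$, so that the claim reduces to showing that
\[
\mathcal{U}(\alpha) \coloneqq \mathcal{W}_{2}^{2}(\alpha,\mu) - \mathcal{W}_{2}^{2}(\alpha \ast \gamma,\nu)
\]
is convex along any generalized geodesic $(\alpha_u)_{0 \leqslant u \leqslant 1}$ with base $\mu$.

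Next I would fix the random variables $Z_0, Z_1, X$ with laws $\alpha_0, \alpha_1, \mu$ furnished by the definition of generalized geodesic, so that $(Z_0,X)$ and $(Z_1,X)$ are optimal couplings, and set $Z_u \coloneqq (1-u)Z_0 + uZ_1$ with $\Law(Z_u) = \alpha_u$. The decisive observation, replacing the use of comonotonicity available only when $d=1$, is that $(Z_u, X)$ remains an optimal coupling. Indeed, by cyclic monotonicity each of the supports of $(Z_0,X)$ and $(Z_1,X)$ is contained in the graph of the subdifferential of a convex function $\varphi_i$, and the standard inclusion
\[
(1-u)\partial\varphi_0(x) + u\partial\varphi_1(x) \subseteq \partial\big((1-u)\varphi_0 + u\varphi_1\big)(x)
\]
shows that $(Z_u, X)$ is supported on a $c$-cyclically monotone set for $c(x,z) = \vert x - z\vert^2$, hence is optimal by Knott--Smith. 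In particular $\mathcal{W}_{2}^{2}(\alpha_u,\mu) = \E[\vert Z_u - X\vert^2]$, and expanding gives
\[
\mathcal{W}_{2}^{2}(\alpha_u,\mu) - \mathcal{W}_{2}^{2}(\alpha_{u_0},\mu) = (u-u_0)^2 \E[\vert Z_1 - Z_0\vert^2] + 2(u-u_0) \E\big[\langle Z_1 - Z_0, Z_{u_0} - X\rangle\big].
\]

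For the smoothed term I would introduce a standard Gaussian $\Gamma$ on $\R^d$, independent of $(Z_0, Z_1, X)$, and select $Y_{u_0}$ such that $(Z_{u_0} + \Gamma, Y_{u_0})$ is an optimal coupling of $(\alpha_{u_0} \ast \gamma, \nu)$. Using the (typically sub-optimal) coupling $(Z_u + \Gamma, Y_{u_0})$ for $(\alpha_u \ast \gamma, \nu)$ then yields the one-sided bound
\[
\mathcal{W}_{2}^{2}(\alpha_u \ast \gamma, \nu) - \mathcal{W}_{2}^{2}(\alpha_{u_0} \ast \gamma, \nu) \leqslant (u-u_0)^2 \E[\vert Z_1 - Z_0\vert^2] + 2(u-u_0) \E\big[\langle Z_1 - Z_0, Z_{u_0} + \Gamma - Y_{u_0}\rangle\big].
\]
Subtracting the two expansions, the $(u-u_0)^2$ contributions cancel exactly, and by independence $\E[\langle Z_1 - Z_0, \Gamma\rangle] = 0$, so that
\[
\mathcal{U}(\alpha_u) - \mathcal{U}(\alpha_{u_0}) \geqslant 2(u-u_0) \E\big[\langle Z_1 - Z_0, Y_{u_0} - X\rangle\big].
\]
The right-hand side is affine in $u$ and touches $u \mapsto \mathcal{U}(\alpha_u) - \mathcal{U}(\alpha_{u_0})$ at $u = u_0$; as $u_0 \in [0,1]$ is arbitrary, this exhibits a supporting affine minorant at every point, which is precisely convexity.

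The main point to handle carefully is the optimality of $(Z_u,X)$: in dimension one comonotonicity takes care of this for free, whereas in general $d$ the subdifferential-sum argument above is doing the work, and I would highlight that it requires no regularity assumption on $\mu$. Once that step is in place, the remainder of the estimates is a line-by-line transcription of the one-dimensional argument, so I expect no further obstacles.
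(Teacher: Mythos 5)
Your proposal is correct and follows essentially the same route as the paper: reduce to the functional $\mathcal{U}(\alpha) = \mathcal{W}_2^2(\alpha,\mu) - \mathcal{W}_2^2(\alpha\ast\gamma,\nu)$, use the generalized‑geodesic representation with random variables $Z_0, Z_1, X$, establish that $(Z_u,X)$ is an optimal coupling, and then carry over verbatim the quadratic expansions from the one‑dimensional case. The single point of divergence is cosmetic: where the paper simply cites \cite[Lemma 9.2.1]{AGS08} for the optimality of $(Z_u,X)$, you supply a self‑contained argument via the inclusion $(1-u)\partial\varphi_0 + u\partial\varphi_1 \subseteq \partial\bigl((1-u)\varphi_0 + u\varphi_1\bigr)$ together with Rockafellar's theorem and the Knott--Smith criterion, which proves the same lemma without regularity on $\mu$ and is a pleasant addition, but does not change the structure of the proof.
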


We do not know whether the above assertion is also true along (non generalized) geodesics $(\alpha_{u})_{0 \leqslant u \leqslant 1}$ in $\PP_{2}(\R^{d})$, when $d > 1$.

\begin{proof}[Proof of Proposition \ref{sec_4_dis_d}] We follow the lines of the proof of Proposition \ref{sec_4_dis_one} and consider again the functional
\[
\mathcal{U}(\alpha) = \mathcal{W}_{2}^{2}(\alpha,\mu) - \mathcal{W}_{2}^{2}(\alpha \ast \gamma,\nu)
\]
as in \eqref{eq_func_ualph}. Let $(\alpha_{u})_{0 \leqslant u \leqslant 1}$ be a generalized geodesic with base $\mu$, joining $\alpha_{0}$ to $\alpha_{1}$. Take $Z_{0}, Z_{1}, Z_{u}, X$ as above such that $(Z_{0},X)$ and $(Z_{1},X)$ are optimal couplings and by definition $Z_{u} \sim \alpha_{u}$. Note that $(Z_{u},X)$ is an optimal coupling of $(\alpha_{u},\mu)$ by \cite[Lemma 9.2.1]{AGS08}, for $0 \leqslant u \leqslant 1$. The equalities \eqref{sec_4_dis_one_ai1} -- \eqref{sec_4_dis_one_ai4} and the inequality in \eqref{sec_4_dis_one_ai5} -- \eqref{sec_4_dis_one_ai8} then carry over verbatim and we again arrive at \eqref{sec_4_dis_one_ai9} -- \eqref{sec_4_dis_one_ai11}, which shows the convexity of the function $[0,1] \ni u \mapsto \mathcal{U}(\alpha_{u})$.
\end{proof}

\bibliographystyle{abbrv}
\bibliography{joint_biblio}

\begin{thebibliography}{10}

\bibitem{AmGi13}
L.~Ambrosio and N.~Gigli.
\newblock A user's guide to optimal transport.
\newblock In {\em Modelling and optimisation of flows on networks}, volume 2062
  of {\em Lecture Notes in Math.}, pages 1--155. Springer, Heidelberg, 2013.

\bibitem{AGS08}
L.~Ambrosio, N.~Gigli, and G.~Savar{\'e}.
\newblock {\em Gradient flows in metric spaces and in the space of probability
  measures}.
\newblock Lectures in Mathematics ETH Z\"urich. Birkh\"auser Verlag, Basel,
  second edition, 2008.

\bibitem{BaBaBeEd19a}
J.~Backhoff-Veraguas, D.~Bartl, M.~Beiglb\"{o}ck, and M.~Eder.
\newblock Adapted {W}asserstein distances and stability in mathematical
  finance.
\newblock {\em Finance Stoch.}, 24(3):601--632, 2020.

\bibitem{BaBeHuKa20}
J.~Backhoff-Veraguas, M.~Beiglb\"{o}ck, M.~Huesmann, and S.~K\"{a}llblad.
\newblock Martingale {B}enamou-{B}renier: a probabilistic perspective.
\newblock {\em Ann. Probab.}, 48(5):2258--2289, 2020.

\bibitem{BBST23}
J.~Backhoff-Veraguas, M.~Beiglb\"ock, W.~Schachermayer, and B.~Tschiderer.
\newblock {T}he structure of martingale {B}enamou--{B}renier in
  $\mathbb{R}^{d}$.
\newblock {\em
  \href{https://arxiv.org/abs/2306.11019}{\textnormal{arXiv:12306.11019}}},
  2023.

\bibitem{Ba83}
R.~Bass.
\newblock Skorokhod embedding via stochastic integrals.
\newblock In J.~Az{\'e}ma and M.~Yor, editors, {\em S{\'e}minaire de
  {Probabilit{\'e}s} {XVII} 1981/82}, number 986 in Lecture {Notes} in
  {Mathematics}, pages 221--224. Springer, 1983.

\bibitem{BeCoHu14}
M.~{Beiglb{\"o}ck}, A.~Cox, and M.~Huesmann.
\newblock Optimal transport and {S}korokhod embedding.
\newblock {\em Invent. Math.}, 208(2):327--400, 2017.

\bibitem{BeHePe12}
M.~Beiglb{\"o}ck, P.~{Henry-Labord{\`e}re}, and F.~Penkner.
\newblock Model-independent bounds for option prices: A mass transport
  approach.
\newblock {\em Finance Stoch.}, 17(3):477--501, 2013.

\bibitem{BeHeTo15}
M.~Beiglb\"{o}ck, P.~Henry-Labord\`ere, and N.~Touzi.
\newblock Monotone martingale transport plans and {S}korokhod embedding.
\newblock {\em Stochastic Process. Appl.}, 127(9):3005--3013, 2017.

\bibitem{BeJoMaPa21b}
M.~Beiglb{\"o}ck, B.~Jourdain, W.~Margheriti, and G.~Pammer.
\newblock Monotonicity and stability of the weak martingale optimal transport
  problem.
\newblock {\em
  \href{https://arxiv.org/abs/2109.06322}{\textnormal{arXiv:2109.06322}}},
  2022.

\bibitem{BeJu16}
M.~Beiglb{\"o}ck and N.~Juillet.
\newblock On a problem of optimal transport under marginal martingale
  constraints.
\newblock {\em Ann. Probab.}, 44(1):42--106, 2016.

\bibitem{BeNuSt19}
M.~Beiglb\"{o}ck, M.~Nutz, and F.~Stebegg.
\newblock Fine properties of the optimal {S}korokhod embedding problem.
\newblock {\em J. Eur. Math. Soc. (JEMS)}, 24(4):1389--1429, 2022.

\bibitem{BeBr99}
J.~Benamou and Y.~Brenier.
\newblock A numerical method for the optimal time-continuous mass transport
  problem and related problems.
\newblock In {\em Monge {A}mp{\`e}re equation: applications to geometry and
  optimization ({D}eerfield {B}each, {FL}, 1997)}, volume 226 of {\em Contemp.
  Math.}, pages 1--11. Amer. Math. Soc., Providence, RI, 1999.

\bibitem{BoNu13}
B.~Bouchard and M.~Nutz.
\newblock Arbitrage and duality in nondominated discrete-time models.
\newblock {\em The Annals of Applied Probability}, 25(2):823--859, 2015.

\bibitem{Br87}
Y.~Brenier.
\newblock D\'ecomposition polaire et r\'earrangement monotone des champs de
  vecteurs.
\newblock {\em C. R. Acad. Sci. Paris S\'er. I Math.}, 305(19):805--808, 1987.

\bibitem{Br91}
Y.~Brenier.
\newblock Polar factorization and monotone rearrangement of vector-valued
  functions.
\newblock {\em Comm. Pure Appl. Math.}, 44(4):375--417, 1991.

\bibitem{CaLaMa14}
L.~{Campi}, I.~{Laachir}, and C.~{Martini}.
\newblock {Change of numeraire in the two-marginals martingale transport
  problem}.
\newblock {\em Finance Stoch.}, 21(2):471--486, June 2017.

\bibitem{ChKiPrSo20}
P.~Cheridito, M.~Kiiski, D.~J. Pr\"{o}mel, and H.~M. Soner.
\newblock Martingale optimal transport duality.
\newblock {\em Math. Ann.}, 379(3-4):1685--1712, 2021.

\bibitem{CoObTo19}
A.~M.~G. Cox, J.~Ob\l~\'{o}j, and N.~Touzi.
\newblock The {R}oot solution to the multi-marginal embedding problem: an
  optimal stopping and time-reversal approach.
\newblock {\em Probab. Theory Related Fields}, 173(1-2):211--259, 2019.

\bibitem{DeTo17}
H.~De~March and N.~Touzi.
\newblock Irreducible convex paving for decomposition of multidimensional
  martingale transport plans.
\newblock {\em Ann. Probab.}, 47(3):1726--1774, 2019.

\bibitem{DoSo12}
Y.~Dolinsky and H.~M. Soner.
\newblock Martingale optimal transport and robust hedging in continuous time.
\newblock {\em Probab. Theory Relat. Fields}, 160(1-2):391--427, 2014.

\bibitem{Fo22a}
H.~F\"{o}llmer.
\newblock Optimal couplings on {W}iener space and an extension of {T}alagrand's
  transport inequality.
\newblock In {\em Stochastic analysis, filtering, and stochastic optimization},
  pages 147--175. Springer, Cham, 2022.

\bibitem{GaHeTo13}
A.~Galichon, P.~{Henry-Labord{\`e}re}, and N.~Touzi.
\newblock A stochastic control approach to no-arbitrage bounds given marginals,
  with an application to lookback options.
\newblock {\em Ann. Appl. Probab.}, 24(1):312--336, 2014.

\bibitem{GhKiLi19}
N.~Ghoussoub, Y.-H. Kim, and T.~Lim.
\newblock Structure of optimal martingale transport plans in general
  dimensions.
\newblock {\em Ann. Probab.}, 47(1):109--164, 2019.

\bibitem{GhKiLi20}
N.~Ghoussoub, Y.-H. Kim, and T.~Lim.
\newblock Optimal {B}rownian stopping when the source and target are radially
  symmetric distributions.
\newblock {\em SIAM J. Control Optim.}, 58(5):2765--2789, 2020.

\bibitem{GhKiPa19}
N.~Ghoussoub, Y.-H. Kim, and A.~Z. Palmer.
\newblock P{DE} methods for optimal {S}korokhod embeddings.
\newblock {\em Calc. Var. Partial Differential Equations}, 58(3):Paper No. 113,
  31, 2019.

\bibitem{GuLo21}
I.~Guo and G.~Loeper.
\newblock Path dependent optimal transport and model calibration on exotic
  derivatives.
\newblock {\em The Annals of Applied Probability}, 31(3):1232--1263, 2021.

\bibitem{GuLoWa19}
I.~Guo, G.~Loeper, and S.~Wang.
\newblock Local volatility calibration by optimal transport.
\newblock In {\em 2017 MATRIX Annals}, pages 51--64. Springer, 2019.

\bibitem{HeObSpTo12}
P.~Henry-Labord\`ere, J.~Ob\l\'oj, P.~Spoida, and N.~Touzi.
\newblock The maximum maximum of a martingale with given {$n$} marginals.
\newblock {\em Ann. Appl. Probab.}, 26(1):1--44, 2016.

\bibitem{HoNe12}
D.~Hobson and A.~Neuberger.
\newblock Robust bounds for forward start options.
\newblock {\em Math. Finance}, 22(1):31--56, 2012.

\bibitem{HuTr17}
M.~Huesmann and D.~Trevisan.
\newblock A {B}enamou-{B}renier formulation of martingale optimal transport.
\newblock {\em Bernoulli}, 25(4A):2729--2757, 2019.

\bibitem{KaTaTo15}
S.~K\"{a}llblad, X.~Tan, and N.~Touzi.
\newblock Optimal {S}korokhod embedding given full marginals and
  {A}z\'{e}ma-{Y}or peacocks.
\newblock {\em Ann. Appl. Probab.}, 27(2):686--719, 2017.

\bibitem{Ka42}
L.~Kantorovich.
\newblock On the translocation of masses.
\newblock {\em C. R. (Doklady) Acad. Sci. URSS (N.S.)}, 37:199--201, 1942.

\bibitem{Lo18}
G.~Loeper.
\newblock Option pricing with linear market impact and nonlinear black--scholes
  equations.
\newblock {\em The Annals of Applied Probability}, 28(5):2664--2726, 2018.

\bibitem{Mc94}
R.~McCann.
\newblock A convexity theory for interacting gases and equilibrum crystals.
\newblock {\em PhD thesis, Princeton University}, 1994.

\bibitem{Mc95}
R.~McCann.
\newblock Existence and uniqueness of monotone measure-preserving maps.
\newblock {\em Duke Math. J.}, 80(2):309--323, 1995.

\bibitem{McCgas}
R.~J. McCann.
\newblock A convexity principle for interacting gases.
\newblock {\em Adv. Math.}, 128(1):153--179, 1997.

\bibitem{Mo81}
G.~Monge.
\newblock Memoire sur la theorie des deblais et des remblais.
\newblock {\em Histoire de l'acad\'emie {R}oyale des {S}ciences de {P}aris},
  1781.

\bibitem{ObSi17}
J.~Ob{\l}{\'o}j and P.~Siorpaes.
\newblock {Structure of martingale transports in finite dimensions}.
\newblock arXiv:1702.08433, 2017.

\bibitem{ObSp14}
J.~Ob{\l}{\'o}j, P.~Spoida, and N.~Touzi.
\newblock Martingale inequalities for the maximum via pathwise arguments.
\newblock In {\em In {M}emoriam {M}arc {Y}or-S{\'e}minaire de
  {P}robabilit{\'e}s XLVII}, pages 227--247. Springer, 2015.

\bibitem{Sa15}
F.~Santambrogio.
\newblock {\em Optimal transport for applied mathematicians}, volume~87 of {\em
  Progress in Nonlinear Differential Equations and their Applications}.
\newblock Birkh\"auser/Springer, Cham, 2015.
\newblock Calculus of variations, PDEs, and modeling.

\bibitem{TaTo13}
X.~Tan and N.~Touzi.
\newblock Optimal transportation under controlled stochastic dynamics.
\newblock {\em Ann. Probab.}, 41(5):3201--3240, 2013.

\bibitem{Vi03}
C.~Villani.
\newblock {\em Topics in optimal transportation}, volume~58 of {\em Graduate
  Studies in Mathematics}.
\newblock American Mathematical Society, Providence, RI, 2003.

\bibitem{Vi09}
C.~Villani.
\newblock {\em Optimal Transport. Old and New}, volume 338 of {\em Grundlehren
  der mathematischen Wissenschaften}.
\newblock Springer, 2009.

\end{thebibliography}
\end{document}